\definecolor{orange}{rgb}{0.898, 0.621, 0.0}
\definecolor{skyblue}{rgb}{0.336, 0.703, 0.910}
\definecolor{green}{rgb}{0, 0.617, 0.449}
\definecolor{yellow}{rgb}{0.937, 0.890, 0.258}
\definecolor{blue}{rgb}{0, 0.445, 0.695}
\definecolor{red}{rgb}{0.832, 0.367, 0}
\definecolor{purple}{rgb}{0.797, 0.473, 0.652}
\numberwithin{equation}{section}
\newtheorem{theorem}{Theorem}
\newtheorem{example}[theorem]{Example}
\newtheorem{corollary}[theorem]{Corollary}
\newtheorem{lemma}[theorem]{Lemma}
\newtheorem{observation}[theorem]{Observation}
\theoremstyle{definition}
\newtheorem{definition}[theorem]{Definition}
\crefname{claim}{claim}{claims}
\crefname{observation}{observation}{observations}
\DeclarePairedDelimiter\abs{\lvert}{\rvert}
\renewcommand{\epsilon}{\ensuremath\varepsilon}
\renewcommand{\phi}{\ensuremath{\varphi}}
\renewcommand{\epsilon}{\ensuremath{\varepsilon}}
\renewcommand{\theta}{\ensuremath{\vartheta}}
\newcommand{\szabo}{Szab{\'o}}
\newcommand{\hajos}{Haj{\'o}s}
\newcommand{\corradi}{Corr{\'a}di}
\newcommand{\orientation}{O}
\newcommand{\xor}{\oplus}
\newcommand{\Cube}{Q}
\newcommand{\Reals}{\mathbb{R}}
\newcommand{\Integer}{\mathbb{Z}}
\newcommand{\Clique}{K}
\newcommand{\dimensionK}{k}
\newcommand{\dimension}{d}
\theoremstyle{definition}
\tikzset{
    ->-/.style={draw=yellow,line width=7pt, 
      postaction={draw=black,very thick,postaction={decorate,decoration={
        markings,
        mark=at position .5 with {\arrow{>}}
      }},}},
    -<-/.style={postaction={decorate,decoration={
        markings,
        mark=at position .5 with {\arrow{<}}
      }}},
}
\newcommand{\fillSingleTile}[3]{
\ifx3#1  \def\myModX{4} \else \def\myModX{5} \fi
\ifx3#2  \def\myModY{4} \else \def\myModY{5} \fi

\fill[#3] (#1, #2) -- (#1 + 1,#2) -- (#1 + 1,#2+1) -- (#1,#2 +1) -- (#1, #2);
\fill[#3] ({Mod(#1 + 1, \myModX)}, #2) -- ({Mod(#1 + 2, \myModX)},#2) -- ({Mod(#1 + 2, \myModX)},#2+1) -- ({Mod(#1 + 1, \myModX)},#2 +1) --({Mod(#1 + 1, \myModX)}, #2);
\fill[#3] ({Mod(#1 + 1,\myModX)}, {Mod(#2 + 1, \myModY)}) -- ({Mod(#1 + 2, \myModX)},{Mod(#2 + 1, \myModY)}) -- ({Mod(#1 + 2, \myModX)},{Mod(#2 + 2, \myModY)}) -- ({Mod(#1 + 1, \myModX)},{Mod(#2 + 2, \myModY)}) -- ({Mod(#1 + 1, \myModX)}, {Mod(#2 + 2, \myModY)});
\fill[#3] (#1, {Mod(#2 + 1, \myModY)}) -- ({Mod(#1 + 1, 5)},{Mod(#2 + 1, \myModY)}) -- ({Mod(#1 + 1, 5)},{Mod(#2 + 2, \myModY)}) -- (#1,{Mod(#2 + 2, \myModY)}) -- (#1, {Mod(#2 + 1, \myModY)});
}
\newcommand{\drawTiling}[8]{
\fillSingleTile{#1}{#2}{red}
\fillSingleTile{#3}{#4}{blue}
\fillSingleTile{#5}{#6}{green}
\fillSingleTile{#7}{#8}{yellow}
\draw (0,0) -- (0, 4) -- (4, 4) -- (4, 0) -- (0,0);
\fill (#1, #2)  circle[radius=2pt] node[anchor = south west]{#1#2};
\fill (#3, #4)  circle[radius=2pt] node[anchor = south west]{#3#4};
\fill (#5, #6)  circle[radius=2pt] node[anchor = south west]{#5#6};
\fill (#7, #8)  circle[radius=2pt] node[anchor = south west]{#7#8};

    \phantom{
        \fill (0, 0) circle[radius=2pt];
        \fill (4, 4) circle[radius=2pt];
    }
}
\newcommand{\vColorA}{black}
\newcommand{\vColorB}{black}
\newcommand{\vColorC}{black}
\newcommand{\vColorD}{black}
\newcommand{\drawUSO}[8]{
\node (#1#2) at (0, 0) {\textcolor{\vColorA}{#1}#2};
\node (#3#4) at (0, 2) {\textcolor{\vColorB}{#3}#4};
\node (#5#6) at (2, 0) {\textcolor{\vColorC}{#5}#6};
\node (#7#8) at (2, 2) {\textcolor{\vColorD}{#7}#8};
\ifx1#1
\draw[very thick,postaction=->-] (#1#2) -- (#5#6);
\else 
\draw[very thick,postaction=-<-] (#1#2) -- (#5#6);
\fi
\ifx1#2
\draw[very thick,postaction=->-] (#1#2) -- (#3#4);
\else 
\draw[very thick,postaction=-<-] (#1#2) -- (#3#4);
\fi
\ifx3#7
\draw[very thick,postaction=->-] (#5#6) -- (#7#8);
\else 
\draw[very thick,postaction=-<-] (#5#6) -- (#7#8);
\fi
\ifx3#8
\draw[very thick,postaction=->-] (#3#4) -- (#7#8);
\else 
\draw[very thick,postaction=-<-] (#3#4) -- (#7#8);
\fi
}
\newcommand{\xRrightarrow}[2][]{\ext@arrow 0359\Rrightarrowfill@{#1}{#2}}
\newcommand{\Rrightarrowfill@}{\arrowfill@\equiv\equiv\Rrightarrow}
\newcommand{\xLleftarrow}[2][]{\ext@arrow 3095\Lleftarrowfill@{#1}{#2}}
\newcommand{\Lleftarrowfill@}{\arrowfill@\Lleftarrow\equiv\equiv}
\newcommand{\xLleftRrightarrow}[2][]{\ext@arrow 3399\LleftRrightarrowfill@{#1}{#2}}
\newcommand{\LleftRrightarrowfill@}{\arrowfill@\Lleftarrow\equiv\Rrightarrow}
\title{A Universal Construction for Unique Sink Orientations}
\let\anonymous\undefined 
    \author{Redacted}
    \affil{Affiliation\\ \texttt{email@adre.ss}}
    \author{Michaela~Borzechowski}
    \affil{Institut f\"ur Informatik, Freie Universit\"at Berlin\\ \texttt{michaela.borzechowski@fu-berlin.de}}
    \author{Joseph~Doolittle}
    \affil{Institut f\"ur Geometrie, Technische Universit\"at Graz\\
    \texttt{jdoolittle@tugraz.at}}
    \author{Simon~Weber}
    \affil{Department of Computer Science\\ ETH Zürich\\ \texttt{simon.weber@inf.ethz.ch}}
\date{}
\begin{document}

\maketitle
\thispagestyle{empty}
\setcounter{page}{0}
\begin{abstract}
Unique Sink Orientations (USOs) of cubes can be used to capture the combinatorial structure of many essential algebraic and geometric problems. 
For various structural and algorithmic questions, including enumeration of USOs and algorithm analysis, it is crucial to have systematic constructions of USOs.
While some construction methods for USOs already exist, each one of them has some significant downside.
Most of the construction methods have limited expressivity --- USOs with some desired properties cannot be constructed. In contrast, the phase flips of Schurr can construct all USOs, but the operation is not well understood.
We were inspired by techniques from cube tilings of space; we expand upon existing techniques in the area to develop \emph{generalized rewriting rules} for USOs.
These rewriting rules are a new construction framework which can be applied to all USOs.
The rewriting rules can generate every USO using only USOs of lower dimension.
The effect of any specific rewriting rule on an USO is simple to understand.
A special case of our construction produces a new elementary transformation of USOs, which we call a \emph{partial swap}.
We further investigate the relationship between partial swaps and phase flips and generalize partial swaps to \emph{phase swaps}.

\end{abstract}

\ifdefined\anonymous
\else
{\footnotesize
\vfill
\textbf{Acknowledgments.}
Michaela Borzechowski is supported by the German Research Foundation DFG within the Research Training Group GRK~2434 \emph{Facets of Complexity}.
Joseph Doolittle is supported by the Austrian Science Fund~FWF, grant P~33278.
Simon Weber is supported by the Swiss National Science Foundation~SNSF, within project~no.~204320.
}
\fi

\newpage
\section{Introduction}

A Unique Sink Orientation (USO) is an orientation of the hypercube graph, such that the induced subgraph of every non-empty subcube contains exactly one sink. \Cref{fig:spinner} shows an USO of the $3$-dimensional cube graph.

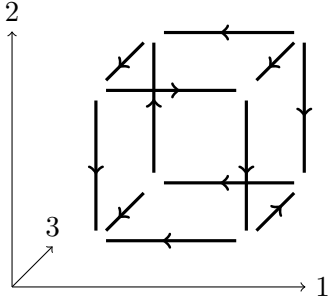
\begin{figure}[h!]
\centering
\begin{tikzpicture}[scale=2]
\newcommand{\dist}{0.5}
\draw[->] (-1.5*\dist,-1*\dist,1*\dist) -- (1.2, -1*\dist,1*\dist) node[right] {$1$};
\draw[->] (-1.5*\dist,-1*\dist,1*\dist) -- (-1.5*\dist, 1.2,1*\dist) node[above] {$2$};
\draw[->] (-1.5*\dist,-1*\dist,1*\dist) -- (-1.5*\dist, -1*\dist, -0.2) node[above] {$3$};
\node (filler) at (1+1.5*\dist,0,1) {};
\foreach \x in {0,1}{
\foreach \y in {0,1}{
\foreach \z in {0,1}{
\node (\x\y\z) at (\x, \y, {1-\z}) {};
}}}
\draw[very thick,postaction=-<-] (000) -- (100);
\draw[very thick,postaction=-<-] (000) -- (010);
\draw[very thick,postaction=-<-] (000) -- (001);
\draw[very thick,postaction=-<-] (011) -- (111);
\draw[very thick,postaction=-<-] (101) -- (111);
\draw[very thick,postaction=-<-] (110) -- (111);
\draw[very thick,postaction=-<-] (100) -- (110);
\draw[very thick,postaction=-<-] (101) -- (100);
\draw[very thick,postaction=-<-] (001) -- (101);
\draw[very thick,postaction=-<-] (011) -- (001);
\draw[very thick,postaction=-<-] (010) -- (011);
\draw[very thick,postaction=-<-] (110) -- (010);
\end{tikzpicture}
\caption{A Unique Sink Orientation of the cube. Note that USOs can also be cyclic.}
\label{fig:spinner}
\end{figure}

USOs were first defined by \szabo{} and Welzl in 2001~\cite{szabo2001usos}. 
They have been widely studied by researchers interested in various optimization problems, as they encode the combinatorial structure of these problems. Examples include the P-matrix linear complementarity problem, linear programming, and many more~\cite{gaertner2006lpuso,gaertner2001enforcing,klaus2012phd,schurr2004phd,stickney1978digraph}.
USOs have also attracted attention as purely combinatorial objects, with interest in structural and algorithmic directions ~\cite{bosshard2017pseudo,fearnley2020ueopl,gaertner2002simplex,gaertner2008grids,gaertner2015recognizing,gaertner2016niceusos,matousek2006numberusos,schurr2004quadraticbound}. 

On the structural side, enumerating or sampling USOs is a challenge. 
First off, recognizing USOs cannot be done in polynomial time in the dimension of the cube, even for orientations that can be represented by a polynomially-sized circuit~\cite{gaertner2015recognizing}, assuming \textsf{coNP} is not contained in \textsf{P}.
Furthermore, out of the many possible orientations for a given cube only few of them are USOs. 
But still, for a cube of a fixed dimension, there are doubly exponentially many USOs in that dimension~\cite{matousek2006numberusos}.
Construction techniques are used to generate bounds on the total number of USOs.
They are also useful to find counterexamples to suspected properties of USOs. 
On the algorithmic side, families of ``bad'' USOs provide examples to show lower bounds for the worst-case runtime of algorithms.
Experimental analysis of algorithms can benefit from efficient generation of random USOs.

Since there are $2^{\Theta(2^k\log k)}$ USOs of the $k$-dimensional hypercube~\cite{matousek2006numberusos} and even more generic orientations, brute-force enumeration quickly becomes an impractical strategy to generate USOs. 
Rather than brute-force search, more systematic approaches are needed to produce and modify~USOs.

\paragraph{Known Construction Methods.}
Already \szabo{} and Welzl saw the need for USO construction methods, and provided two methods~\cite{szabo2001usos}: 
Firstly, given an USO, all edges of a certain dimension can be \emph{flipped} at once, i.e., their orientation can be reversed.
Secondly, dimensions can be collapsed to create so-called \emph{inherited orientations}. 
These two constructions either preserve or reduce the dimension of the USO, and are thus not useful for creating large USOs.

It is also possible to perform more local modifications in an USO. Schurr and \szabo{} showed that under certain conditions, a subcube of an USO can be replaced by any other USO of the same dimension~\cite{schurr2004quadraticbound}. 
Unfortunately, there exist USOs in which no such modification is possible~\cite{schurr2004phd}. 
Schurr described an equivalence relation on edges of the same dimension, where flipping an equivalence class (also called a \emph{phase}) preserves the USO condition~\cite{schurr2004phd}. 
A Markov chain based on these flips converges to the uniform distribution, but its mixing rate remains unknown. 
Furthermore, the only known way of computing these equivalence classes is not efficient, as every pair of edges of the same dimension must be compared.

The most useful construction tool is the \emph{product construction} of Schurr and \szabo{}~\cite{schurr2004quadraticbound}, in which~$2^k$ USOs of the same dimension $d$ are connected by a $k$-dimensional \emph{frame} USO. 
This construction also admits two simple special cases ($k=1$ or $d=1$), as illustrated in \Cref{fig:productConstruction}.

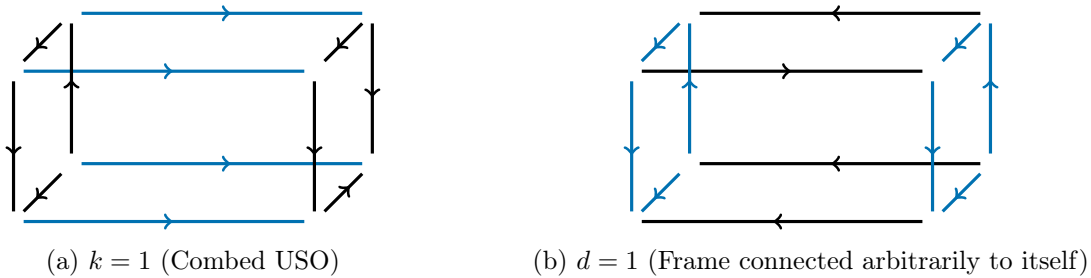
\begin{figure}[h!]
\centering
\begin{subfigure}{.49\textwidth}
  \centering
\begin{tikzpicture}[scale=2]
\foreach \x in {0,1}{
\foreach \y in {0,1}{
\foreach \z in {0,1}{
\node (\x\y\z) at (2*\x, \y, {1-\z}) {};
}}}
\draw[very thick,postaction=-<-, color=blue] (100) -- (000);
\draw[very thick,postaction=-<-, color=blue] (111) -- (011);
\draw[very thick,postaction=-<-, color=blue] (101) -- (001);
\draw[very thick,postaction=-<-, color=blue] (110) -- (010);
\draw[very thick,postaction=-<-] (000) -- (010);
\draw[very thick,postaction=-<-] (000) -- (001);
\draw[very thick,postaction=-<-] (011) -- (001);
\draw[very thick,postaction=-<-] (010) -- (011);
\draw[very thick,postaction=-<-] (101) -- (111);
\draw[very thick,postaction=-<-] (110) -- (111);
\draw[very thick,postaction=-<-] (100) -- (110);
\draw[very thick,postaction=-<-] (101) -- (100);
\end{tikzpicture}
  \caption{$k = 1$ (Combed USO)}
  \label{fig:k1}
\end{subfigure}
\begin{subfigure}{.49\textwidth}
  \centering
\begin{tikzpicture}[scale=2]
\foreach \x in {0,1}{
\foreach \y in {0,1}{
\foreach \z in {0,1}{
\node (\x\y\z) at (2*\x, \y, {1-\z}) {};
}}}
\draw[very thick,postaction=-<-] (000) -- (100);
\draw[very thick,postaction=-<-] (011) -- (111);
\draw[very thick,postaction=-<-] (001) -- (101);
\draw[very thick,postaction=-<-] (110) -- (010);
\draw[very thick,postaction=-<-, color=blue] (000) -- (010);
\draw[very thick,postaction=-<-, color=blue] (000) -- (001);
\draw[very thick,postaction=-<-, color=blue] (011) -- (001);
\draw[very thick,postaction=-<-, color=blue] (010) -- (011);
\draw[very thick,postaction=-<-, color=blue] (111) -- (101);
\draw[very thick,postaction=-<-, color=blue] (110) -- (111);
\draw[very thick,postaction=-<-, color=blue] (100) -- (110);
\draw[very thick,postaction=-<-, color=blue] (100) -- (101);
\end{tikzpicture}
  \caption{$d=1$ (Frame connected arbitrarily to itself)}
  \label{fig:d1}
\end{subfigure}
\caption{The two special cases of the product construction. The frame is colored in blue.}
\label{fig:productConstruction}
\end{figure}

While the product construction is a way to increase the dimension of USOs, it is still insufficient to construct many interesting USOs. 
For example, it is impossible to create an USO in which no edge can be flipped on its own (we say there are no \emph{flippable} edges), using product constructions with USOs that do contain flippable edges~\cite{schurr2004phd}.
It has also been shown that every USO created by successively applying product constructions is an easy input for the \textsc{Random Facet} sink-finding algorithm~\cite{weber2020randomfacet}, so this construction can not be used to prove a lower bound for this algorithm.

Since 2004, no construction methods applicable to all USOs have been discovered. 
Recent research has focused on constructions which are only applicable to USOs with certain properties, yielding again USOs with these properties. 
Such constructions have been found for \emph{bowed} USOs~\cite{weber2020randomfacet}, and for \emph{P-Cubes}~\cite{gao2020dcubes}.

\paragraph{Results.} Based on a remark of Schurr~\cite{schurr2004phd}, we prove a one-to-one correspondence between so-called \emph{$4\Integer^\dimensionK$-periodic tilings} and $k$-dimensional USOs. 
Representations of these tilings can be modified in the language of string rewriting, in particular this technique was used to disprove Keller's conjecture on unit cube tilings~\cite{keller1930conjecture,lagarias1992keller,mackey2002eightdimensional}.
We generalize these construction techniques and translate them into the language of USOs. 
Our generalization provides a very general framework with many parameters, and every choice of parameters is a new construction which can be applied to any USO. 
Given all $2$-dimensional USOs, repeated application of constructions from our framework can be used to generate all USOs of dimension $k\geq 1$, we thus call our framework \emph{universal}. 
Additionally, we show that we can realize the following constructions as special cases of our framework: (i) the dimension-reducing operations of taking facets, and taking inherited orientations, (ii) the dimension-preserving operations of flipping all edges of a dimension, and mirroring along a dimension, and (iii) the dimension-increasing product construction.
These constructions are the only constructions known to the authors which can be applied to all USOs. 
Finally, yet another special case of our construction gives a new dimension-preserving modification, the \emph{partial swap}, which can be applied to all USOs. We investigate the relationship between partial swaps and phases, and generalize the partial swap to the \emph{phase swap}.

\paragraph{Discussion.}
Unfortunately, our new construction exhibits a similar weakness to the phase flips of Schurr. 
While our construction is universal and each step in the universality proof is very systematic, our construction does not provide a suitable way to enumerate all USOs. 
This is because checking that the parameters of our construction fulfill the required conditions is a computationally expensive process.
The key benefit to our framework of string rewriting rules over previous constructions is the fact that each choice of the parameters induces a new construction applicable to all USOs. 
For every such construction, the effects of this transformation on the input USO are easy to understand and to describe.
Our construction may be useful in the future for proving algorithmic lower bounds. 
Furthermore, it may provide further insights into the structure of USOs.

\paragraph{Remarks.}
The bijection between $4\Integer^\dimensionK$-periodic tilings and USOs allows results from these fields to be exchanged.
In particular, Schurr \cite{schurr2004phd} computed the number of five-dimensional USOs, but was unsure whether his program yielded the actual number of five-dimensional USOs.
Independently, Mathew, Östergård and Popa \cite{mathew2013enumeratingtilings} explicitly generated the list of all isomorphy classes of $4\Integer^5$\nobreakdash-periodic tilings, arriving at the same total number of $4\Integer^5$-periodic tilings of 638\,560\,878\,292\,512.
This result confirms Schurr's count and provides an explicit list of all USOs in one dimension higher than previously accessible.

\paragraph{Paper Structure.}
In \Cref{sec:background}, we introduce the necessary notation and terminology concerning USOs and the previously known constructions in more detail. 
We also give an overview of unit cube tilings, discuss results related to Keller's conjecture, and prove the equivalence of $4\Integer^\dimensionK$-periodic tilings and USOs. 
In \Cref{sec:thebox}, we introduce string rewriting rules, introduce the partial swap operation and give some examples and intuition. 
In \Cref{sec:universality}, we prove the universality of our construction. 
In \Cref{sec:emulating}, we revisit the known constructions and analyze them in the framework of string rewriting rules, and introduce the phase swap operation. 
Finally, in \Cref{sec:conclusion} we ask some open questions.

\section{Background}\label{sec:background}
\subsection{Hypercubes and Orientations}
The \emph{$k$-dimensional hypercube graph} $Q_k$ (also called \emph{$k$-cube}) is the undirected graph on the vertex set \mbox{$V(Q_k) = \{0,1\}^k$}, where two vertices are connected by an edge if they differ in exactly one coordinate. 
For a vertex $v$ and a dimension $i\in [k]$, the vertex $v\xor i$ is the neighbor of $v$ which differs from $v$ in coordinate $i$. 
The edge between $v$ and $v\xor i$ is called an \emph{$i$-edge}, or an \emph{edge of dimension \(i\)}.
\begin{definition}
A \emph{face} of $Q_k$ described by a string $f\in\{0,1,*\}^k$ is the induced subgraph of $Q_k$ on the vertex set $V(f)\coloneqq \{v\in V(Q_k)\;|\;\forall i \in [k] : v_i=f_i \text{ or }f_i=* \}$.  
The \emph{dimension} of the face is the number of~$*$~symbols in $f$.
\end{definition}
\begin{definition}
A face of dimension $k-1$ is called a \emph{facet}. 
The facet described by the string with a~$1$ at the $i$-th position is called the \emph{upper $i$-facet}, and its opposite facet (described by the string with a $0$ at the $i$-th position) is called the \emph{lower $i$-facet}.
\end{definition}

An \emph{orientation} $O$ of $Q_k$ assigns each of the $k2^{k-1}$ edges a direction. 
We say that an edge $\{v,v\xor i\}$ is oriented \emph{upwards} if it is oriented from its endpoint in the lower $i$-facet to its endpoint in the upper $i$-facet. 
Otherwise, the edge is oriented \emph{downwards}. 
We can view  $O$ as a function $O:V(Q_k)\rightarrow \{0,1\}^k$, assigning each vertex a bitstring, such that $O(v)_i=1$ if and only if $\{v,v\xor i\}$ is oriented upwards. 
In our figures showing USOs, we indicate upwards edges with a yellow background. 

We note that our definition of orientations as functions differs from the ``outmap functions'' that are standard in USO literature~\cite{szabo2001usos}.
The benefit of this alternative definition will become apparent later.

The orientation where all edges are oriented downwards is called \emph{the canonical orientation}.
If all $i$-edges are oriented in the same direction for at least one dimension $i$, the orientation is said to be \emph{combed}.

We will now introduce some notation for modifying and combining orientations.

\begin{definition}
A \emph{partial orientation} on a vertex set $\sigma\subseteq V(Q_k)$ assigns an orientation only to the edges $E_\sigma$ of $Q_k$ which have at least one endpoint in $\sigma$. 
For an orientation $O$, $O_\sigma$ is the partial orientation on \(\sigma\) which agrees with $O$ on $\sigma$.
\end{definition}
\begin{definition}
Given a partial orientation $A$ on $\sigma$ and a partial orientation $B$ on $\sigma^C\coloneqq V(Q_k) \setminus \sigma$, such that $A$ and $B$ agree on the orientation of the edges across the cut $(\sigma,\sigma^C)$, the \emph{combined orientation} $O\coloneqq A\cup B$ is given by the function
\[ O(v)\coloneqq \begin{cases} 
      A(v) & \text{if }v\in\sigma, \\
      B(v) & \text{otherwise}.
   \end{cases}
\]

\end{definition}

\subsection{Unique Sink Orientations}
\begin{definition}
An orientation $O$ of $Q_k$ is a \emph{Unique Sink Orientation (USO)} if within each non-empty face $f$ of~$Q_k$, there exists exactly one vertex with no outgoing edges. That is, there is a unique sink in each face with respect to the orientation $O$.
\end{definition}

\szabo{} and Welzl~\cite{szabo2001usos} provide a useful characterization for USOs. 
The following theorem differs in appearance slightly from the one given in~\cite{szabo2001usos}, since we use different notation.

\begin{theorem}[\szabo{}-Welzl Condition]\label{thm:szabowelzl}
An orientation $O$ of $Q_k$ is an USO if and only if for all pairs of distinct vertices $v,w\in V(Q_k)$, we have
\[\exists i\in [k]:\; v_i\not=w_i \wedge \big(O(v)_i=O(w)_i\big).\]
\end{theorem}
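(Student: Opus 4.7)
The condition is exactly the Szab\'o--Welzl characterization of USOs rewritten in the paper's edge-direction convention. Introducing the standard outmap $s(v)_i := O(v)_i \oplus v_i$, which equals $1$ iff the $i$-edge at $v$ is outgoing from $v$, a one-line XOR check shows the equivalence ``$v_i \neq w_i$ and $O(v)_i = O(w)_i$'' $\Leftrightarrow$ ``$v_i \neq w_i$ and $s(v)_i \neq s(w)_i$.'' My plan is to prove the sharper equivalent statement that $O$ is a USO if and only if for every subcube $F$ on dimensions $D$, the restricted outmap $v \mapsto s(v)|_D$ is a bijection from $V(F)$ onto $\{0,1\}^D$; applying this to the subcube on $D = v \oplus w$ then yields the theorem.

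For the easy direction $(\Leftarrow)$, assume the stated condition and fix any non-empty subcube $F$ on dimensions $D$. For distinct $v, w \in V(F)$ we have $v \oplus w \subseteq D$, so the hypothesis produces $i \in v \oplus w \subseteq D$ with $s(v)_i \neq s(w)_i$. Thus $s|_D$ is injective on the $2^{|D|}$ vertices of $V(F)$, hence a bijection, and the preimage of $0^D$ is the unique vertex of $F$ with all $D$-edges incoming, i.e., the unique sink of $F$.

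For $(\Rightarrow)$, assume $O$ is a USO. I would invoke the dimension-flip construction already cited in the introduction: flipping all edges of a single dimension $j$ preserves the USO property, which can be verified directly from the definition by showing that the sink of each subcube is uniquely relocated under the flip (the new sink of a subcube containing dimension $j$ is determined by its two $j$-facets' own sinks together with the reversed $j$-edge between them). For any subcube $F$ on $D$ and any $\alpha \in \{0,1\}^D$, successively flipping within $F$ every dimension $j$ with $\alpha_j = 1$ produces a USO whose outmap is $s \oplus \alpha$; its unique sink then corresponds under $O$ to a vertex with $s|_D = \alpha$. This gives surjectivity, hence bijectivity of $s|_D$ on $V(F)$. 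Applying this to $F$ with $D = v \oplus w$ produces $i \in v \oplus w$ with $s(v)_i \neq s(w)_i$, which translates back to $v_i \neq w_i$ and $O(v)_i = O(w)_i$.

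The main obstacle is the $(\Rightarrow)$ direction, specifically showing that the outmap attains every value in $\{0,1\}^D$ on each subcube rather than just $0^D$. A direct induction on $|v \oplus w|$ handles the case where $v$ or $w$ equals the sink $s^*$ of the spanned subcube (since $s^*$'s outmap is then forced and any out-edge of the other vertex supplies the needed index), but the case where neither is becomes thorny: pairing each of $v, w$ with $s^*$ only produces inductive indices in complementary halves of $v \oplus w$ that do not visibly combine into a common index. Invoking the flip construction packages the needed symmetry cleanly, and its independent verification is a brief case analysis on how a single-dimension flip reassigns sinks.
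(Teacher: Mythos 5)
Your proof is correct. Note that the paper does not prove this statement at all --- it is quoted as a known result of Szab\'o and Welzl, restated in the paper's edge-direction notation --- so there is no internal proof to compare against; your argument is essentially the canonical one from the cited source: translate to the outmap $s(v)_i = O(v)_i \oplus v_i$, show $s$ restricted to any face is a bijection, and obtain surjectivity by flipping dimensions (each flip preserving the USO property, a fact the paper itself also records). The only cosmetic imprecision is the phrase ``the reversed $j$-edge between them'': the two facet sinks $t_0,t_1$ need not be joined by a $j$-edge, and the correct bookkeeping is that exactly one of $t_0,t_1$ has its own incident $j$-edge incoming before the flip, hence exactly one does after --- which is what your case analysis delivers anyway.
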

This condition states that for every pair of vertices \(v,w\), within the minimal face containing both of them, there is a dimension $i$ in which the $i$-edges incident to $v$ and $w$ agree on their orientation.

All canonical orientations, where every edge of each dimension are oriented the same direction, are USOs, but there exist many more USOs. 
In fact, the number of USOs of $Q_k$ is $k^{\Theta(2^k)}$~\cite{matousek2006numberusos}, compared to $2^{k2^{k-1}}$ orientations in total.

\subsection{Overview of Previously Known Constructions}
\subsubsection{Product Constructions}
The product construction of Schurr and \szabo{}~\cite{schurr2004quadraticbound} is the most commonly used USO construction. 
Its inputs are a single $k$-dimensional USO $O$ (which we call the ``frame'') and a $d$-dimensional USO~$O_v$ for each vertex $v$ of \(O\). 
The vertices of \(O\) are replaced by the corresponding \(O_v\), and the edges of \(O\) are duplicated to fill the gaps.
This construction is described formally in the following Lemma.

\begin{lemma}[Product Construction {\cite[Lemma 3]{schurr2004quadraticbound}}]
Given an USO $O$ of $Q_k$ and USOs $O_v$ of $Q_d$ for each vertex $v\in V(Q_k)$, the product orientation $O'$ of $Q_{k+d}$ defined by
\[\forall v\in V(Q_{k+d}),i\in[k+d]: O'(v)_i\coloneqq \begin{cases}
    O(v_{1,\ldots,k})_i & \text{for }i\leq k,\\
    O_v(v_{k+1,\ldots,k+d})_{i-k} & \text{otherwise},
\end{cases}\]
is also an USO.
\end{lemma}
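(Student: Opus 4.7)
The plan is to verify the Szabó-Welzl condition (\Cref{thm:szabowelzl}) for $O'$. Given two distinct vertices $u, w \in V(Q_{k+d})$, I split them as $u = (u', u'')$ and $w = (w', w'')$ with $u', w' \in \{0,1\}^k$ and $u'', w'' \in \{0,1\}^d$, and show that a valid coordinate $i$ exists in every case. The natural split gives two cases: either $u'\neq w'$ or $u'=w'$ (forcing $u''\neq w''$).

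In the first case ($u' \neq w'$), I apply the Szabó-Welzl condition to the frame USO $O$ on the pair $u', w'$, obtaining some $i \in [k]$ with $u'_i \neq w'_i$ and $O(u')_i = O(w')_i$. By the defining formula of $O'$, for indices $i \leq k$ we have $O'(u)_i = O(u')_i$ and $O'(w)_i = O(w')_i$, while $u_i = u'_i$ and $w_i = w'_i$. Hence this same coordinate $i$ witnesses the condition for $O'$.

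In the second case ($u' = w'$, so $u'' \neq w''$), the relevant subcube USO is $O_{u'} = O_{w'}$. I apply the Szabó-Welzl condition to $O_{u'}$ on the pair $u'', w''$, obtaining some $j \in [d]$ with $u''_j \neq w''_j$ and $O_{u'}(u'')_j = O_{u'}(w'')_j$. Setting $i = k+j$, the defining formula gives $O'(u)_i = O_{u'}(u'')_j = O_{w'}(w'')_j = O'(w)_i$, and $u_i = u''_j \neq w''_j = w_i$, as required.

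The main (only) subtlety is why Case 1 must use the frame USO rather than any of the $O_v$: when $u' \neq w'$, the subcube USOs attached to $u'$ and $w'$ are potentially different, so comparing $O_{u'}(u'')$ with $O_{w'}(w'')$ directly carries no information. Fortunately, we do not need it: by carefully splitting the range of $i$ between ``frame coordinates'' and ``fiber coordinates'' in the definition of $O'$, the witness always lies in the frame in Case 1 and in the (single, shared) fiber in Case 2. Since the two cases are exhaustive, the Szabó-Welzl characterization yields that $O'$ is a USO.
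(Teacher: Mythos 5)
Your proof is correct. Note that the paper itself does not prove this lemma---it is stated as a cited result from Schurr and \szabo{}---so there is no in-paper argument to compare against; your verification via the \szabo{}-Welzl condition (\cref{thm:szabowelzl}) is a clean, self-contained, and standard way to establish it, and your two cases (witness in the frame coordinates when the $Q_k$-parts differ, witness in the fiber coordinates of the single shared $O_{u'}$ otherwise) are exhaustive and handled correctly. You also implicitly resolve the mild notational abuse in the statement, reading $O_v(v_{k+1,\ldots,k+d})$ as $O_{v_{1,\ldots,k}}(v_{k+1,\ldots,k+d})$, which is the intended meaning.
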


There are two particular special cases of this product construction we will emphasize.
The first case is when the frame is $1$-dimensional ($k=1$), and the second is when all USOs $O_v$ are $1$-dimensional ($d=1$).

If the frame is $1$-dimensional, the resulting $(d+1)$-dimensional product USO is composed of the two $d$-dimensional USOs $O_{(0)}$ and $O_{(1)}$ connected in a combed way. 

The other extreme is if the USOs $O_v$ are all $1$-dimensional; then the product construction replaces each vertex of the frame $O$ by a single edge. 
The resulting $(k+1)$-dimensional product USO consists of two copies of the $k$-dimensional frame $O$, with the orientation of the connecting edges decided by the~$O_v$.

\subsubsection{Inherited Orientations}
In contrast to all other known constructions, the inherited orientations of \szabo{} and Welzl~\cite{szabo2001usos} \emph{decrease} the dimension of the input USO, rather than increasing the dimension or leaving it fixed. 
The inputs of this construction are an USO $O$ of $Q_k$ and an integer $k'<k$. 
The inherited orientation~$O'$ of $O$ on $Q_{k'}$ is obtained by collapsing each $(k-k')$-dimensional face spanned by the dimensions~$[k]\setminus [k']$ to the sink of that face.
Formally, $O'$ is defined by
\[ \forall v\in V(Q_{k'}),i\in[k']: O'(v)_i\coloneqq  O(sink_O(v,\overbrace{*,\ldots, *}^{k-k'}))_i,\]
where $sink_O(f)$ is the sink of the face $f$ in $O$. 
The face $(v, *, \ldots, *)$ is exactly the face containing~$v$ that gets collapsed.
The paper which introduced this construction also showed that the result is always an USO~\cite[Lemma 3.1]{szabo2001usos}.

\subsubsection{Phase Flipping}

Schurr~\cite{schurr2004phd} described a way to determine the subsets of $i$-edges which can be flipped together.
The \emph{phases} of an USO are the minimal equivalence classes in the set of \(i\)-edges of the USO such that flipping any collection of these equivalence classes preserves the USO property.
The minimality condition ensures that flipping a set of \(i\)-edges which is not a union of phases will destroy the USO property.
It is possible that a phase consists of only a single edge $e$. In this case, we say that $e$ is \emph{flippable}.

\begin{lemma}[{\cite[Lemma 4.13]{schurr2004phd}}]
An edge $\{v,v\xor i\}$ in an USO $O$ is flippable if and only if $O$ fulfills \mbox{$O(v)=O(v\xor i)$}.
\end{lemma}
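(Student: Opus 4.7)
The plan is to exploit the observation that $O(v)_i$ and $O(v\oplus i)_i$ both record the direction of the single edge $e=\{v,v\oplus i\}$ and therefore coincide automatically; hence $O(v)=O(v\oplus i)$ is really the statement that for every $j\neq i$ the $j$-edges at $v$ and at $v\oplus i$ point in the same direction. With this in mind I would verify both implications via the \szabo{}-Welzl condition (\Cref{thm:szabowelzl}) applied to the orientation $O'$ obtained from $O$ by reversing~$e$.

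For the ``if'' direction, the only pairs for which the \szabo{}-Welzl witness could change when passing from $O$ to~$O'$ are those involving $v$ or $v\oplus i$. The pair $(v,v\oplus i)$ itself is witnessed by dimension~$i$, because flipping~$e$ toggles $O(v)_i$ and $O(v\oplus i)_i$ simultaneously. For a mixed pair $(v,w)$ with $w\notin\{v,v\oplus i\}$ I would split on whether $w_i=v_i$ or $w_i=(v\oplus i)_i$. In the first case the original \szabo{}-Welzl witness for $(v,w)$ in~$O$ must use a dimension $\neq i$ (since $v_i=w_i$) and so remains valid in~$O'$ unchanged. In the second case I would invoke \szabo{}-Welzl on $(v\oplus i,w)$ in~$O$; the resulting witness is forced to lie in some dimension $j'\neq i$ because $(v\oplus i)_i=w_i$, and the hypothesis $O(v)_{j'}=O(v\oplus i)_{j'}$ then transfers it to a valid witness for $(v,w)$ in~$O'$. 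The pairs involving $v\oplus i$ are handled symmetrically.

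For the ``only if'' direction I would argue the contrapositive inside the $2$-face $F$ spanned by dimensions $\{i,j\}$ through~$v$, where $j$ is any index with $O(v)_j\neq O(v\oplus i)_j$. Applying \szabo{}-Welzl to the two $j$-edges $\{v,v\oplus j\}$ and $\{v\oplus i,v\oplus i\oplus j\}$ in~$O$ gives $O(v)_j=O(v\oplus j)_j$ and $O(v\oplus i)_j=O(v\oplus i\oplus j)_j$, propagating the disagreement across~$F$ to yield $O(v)_j\neq O(v\oplus i\oplus j)_j$. Hence for the antipodal pair $(v,v\oplus i\oplus j)$ the only remaining candidate witness is dimension~$i$, forcing $O(v)_i=O(v\oplus i\oplus j)_i$. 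But reversing~$e$ toggles $O'(v)_i$ while leaving $O'(v\oplus i\oplus j)_i$ intact, so in~$O'$ neither $i$ nor $j$ witnesses this pair, and $O'|_F$ already fails to be an USO. The main obstacle I expect is keeping this step short: a direct enumeration of the twelve two-dimensional USOs would also work but is tedious, and my plan sidesteps it by focusing on the single forced antipodal pair inside~$F$.
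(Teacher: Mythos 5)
Your proof is correct. Note that the paper does not prove this statement itself --- it is quoted from Schurr's thesis \cite[Lemma~4.13]{schurr2004phd} --- so there is no in-paper argument to compare against; but your self-contained derivation via \Cref{thm:szabowelzl} is sound. The key simplification you exploit, namely that $O(v)_i=O(v\xor i)_i$ holds automatically under this paper's (non-outmap) encoding of orientations, is exactly right, and both directions check out: the case analysis for the ``if'' direction correctly transfers witnesses from $(v\xor i,w)$ to $(v,w)$ using the hypothesis, and the ``only if'' direction correctly isolates the antipodal pair of the $2$-face spanned by $i$ and $j$ as the pair whose unique Szab\'o--Welzl witness is destroyed by the flip.
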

This corresponds to $v$ and $v\xor i$ having the same pattern of incident upwards and downwards edges.

The set of all $i$-edges is also a union of phases, as each edge is in some phase. 
Therefore, flipping all $i$-edges of an USO preserves the USO condition. This has already been observed by \szabo{} and Welzl~\cite{szabo2001usos}.

\subsubsection{Hypervertex Replacement}
As mentioned above, if the edges adjacent to both endpoints of some edge are oriented the same way in all dimensions, the edge can be flipped. 
The hypervertex replacement of Schurr and \szabo{}~\cite{schurr2004quadraticbound} generalizes this concept from single edges to higher-dimensional faces. 
If for each dimension $i$ which leaves some face, every $i$-edge incident to the face is oriented the same way, then the face can be replaced by any USO of the same dimension.

\begin{lemma}[{\cite[Corollary 6]{schurr2004quadraticbound}}]
For an USO $O$ of $Q_k$, and $f$ some $d$-dimensional face of $Q_k$ such that 
\[\forall v,w\in V(f): f_i\not=*\Longrightarrow O(v)_i=O(w)_i,\]
the orientation on the face $f$ can be replaced by another USO $O_f$ of $Q_d$. The resulting orientation~$O'$ defined by
\[\forall v\in V(Q_k),i\in [k]: O'(v)_i\coloneqq \begin{cases}
    O(v)_i & \text{if }f_i\not=* \text{ or } v\not\in f, \\
    O_f(\{v_j | f_j = *\})_i & \text{otherwise},
\end{cases}
\]
is also an USO.
\end{lemma}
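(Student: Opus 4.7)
The plan is to verify the Szabó-Welzl condition (\Cref{thm:szabowelzl}) for $O'$. Given any two distinct vertices $v, w \in V(Q_k)$, we must exhibit a dimension $i \in [k]$ with $v_i \neq w_i$ and $O'(v)_i = O'(w)_i$. A useful shorthand: the hypothesis says that for each non-star dimension $i$ of $f$, the value $O(u)_i$ is constant over $u \in V(f)$; call this constant $c_i$. By construction $O'(u)_i = c_i$ as well for every $u \in V(f)$ and every non-star~$i$. I would split into three cases based on how $v$ and $w$ relate to~$f$.

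The easy cases are \textbf{both vertices outside $f$} and \textbf{both vertices inside $f$}. In the first, $O'$ agrees with $O$ on $v$ and $w$ (in every dimension), so the Szabó-Welzl condition for $O$ transfers directly. In the second, $v$ and $w$ must differ in some star dimension of $f$, since they agree with $f$ on every non-star dimension. Projecting $v$ and $w$ to their star-coordinates yields two distinct vertices of $Q_d$, so applying Szabó-Welzl to the USO $O_f$ provides a star dimension $i$ where the projections differ and $O_f$ agrees on them. By construction $O'(v)_i = O'(w)_i$ on this dimension, and $v_i \neq w_i$ as required.

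The main obstacle is the \textbf{mixed case}, say $v \in V(f)$ and $w \notin V(f)$, because the orientation of $v$'s edges in star dimensions was entirely overwritten by $O_f$, and one cannot simply invoke Szabó-Welzl on $(v,w)$ in $O$. I would bridge this gap by constructing an auxiliary vertex $w' \in V(f)$ that projects $w$ onto~$f$: set $w'_i = w_i$ for star dimensions of $f$ and $w'_i = f_i$ for non-star ones. Since $w \notin V(f)$, the vertices $w$ and $w'$ disagree in at least one non-star dimension, so $w \neq w'$. Applying Szabó-Welzl to $O$ on the pair $(w, w')$ produces a dimension $i$ where $w_i \neq w'_i$ and $O(w)_i = O(w')_i$; such an $i$ must be non-star because $w$ and $w'$ agree on all star dimensions.

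On this dimension $i$, we have $v_i = f_i = w'_i \neq w_i$, so the dimensions differ between $v$ and $w$. Moreover $O'(w)_i = O(w)_i$ (since $w \notin V(f)$) and $O'(v)_i = O(v)_i = c_i$ (since $i$ is non-star, so $f_i \neq *$). Using the hypothesis, $O(w')_i = c_i$ as well, and therefore $O(w)_i = O(w')_i = c_i = O(v)_i$, giving $O'(v)_i = O'(w)_i$. This handles the mixed case and completes the verification. The symmetric case $w \in V(f)$, $v \notin V(f)$ is identical. The conceptual point is that the hypothesis makes $f$ look like a single ``hypervertex'' from the outside: every exit edge in direction $i$ is oriented the same way, so the internal orientation of $f$ can be swapped for any USO of the appropriate dimension without disturbing the uniqueness of sinks in any face crossing into or out of~$f$.
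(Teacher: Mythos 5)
Your proof is correct, but it takes a different route from the one the paper itself gives. The paper states this lemma with only a citation in the background section and then, in \Cref{sec:emulating}, re-proves it using its own machinery: it views the ambient USO $K$ as the output of the universality construction (\Cref{thm:universality}), observes that the hypervertex condition forces $H$ into exactly one of the sets $S^{(0)}_1,S^{(0)}_2,S^{(2)}_1,S^{(2)}_2$, replaces that set, and reduces correctness to checking two $(n-1)$-dimensional unions, closing the argument by induction on dimension. You instead verify the Szab\'o--Welzl condition (\Cref{thm:szabowelzl}) directly for $O'$, splitting into both-inside, both-outside, and mixed pairs; your key move in the mixed case --- projecting the outside vertex $w$ onto $f$ to get $w'$, applying Szab\'o--Welzl to $(w,w')$ in $O$ to extract a non-star dimension, and then routing through the constant $c_i$ guaranteed by the hypervertex hypothesis --- is exactly what makes the argument go through, and all three cases check out. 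Your version is more elementary and self-contained: it needs nothing beyond \Cref{thm:szabowelzl}, whereas the paper's re-proof leans on the universality construction and the partial swap (\Cref{thm:swappingNonCubicSubgraphs}) and is there chiefly to demonstrate that the rewriting framework subsumes hypervertex replacement. What the paper's route buys is a uniform treatment of all the classical constructions inside one formalism; what yours buys is a short, direct argument that could stand on its own in the background section.
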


\subsection{Unit Cube Tilings and Keller's Conjecture}
A \emph{tiling} of $\Reals^\dimensionK$ by unit cubes is an infinite collection of $\dimensionK$-dimensional axis-aligned unit cubes, such that (i) every point in $\Reals^\dimensionK$ is contained in at least one cube, and (ii) if a point is contained in multiple cubes, it lies on the boundary of all such cubes. 
In 1930, Keller conjectured that any such tiling must contain a pair of cubes (``twins''), such that their intersection is a facet of both cubes~\cite{keller1930conjecture}. 

\hajos{}~\cite{hajos1950factorisation} reformulated Keller's conjecture as a group theoretical statement. 
Based on this view, \szabo{}\footnote{Note that S{\'a}ndor \szabo{} working on tilings~\cite{szabo1993cubetilings} and Tibor \szabo{} working on USOs~\cite{szabo2001usos} are different people. Their work is independent.}~\cite{szabo1993cubetilings} showed that Keller's conjecture can be decided by looking only at so-called \emph{$4\Integer^\dimensionK$-periodic tilings}, i.e., there is a tiling without twins if and only if there is a $4\Integer^\dimensionK$-periodic tiling without twins.
In a $4\Integer^\dimensionK$-periodic tiling we tile the $k$-cube of side length $4$ by $2^\dimensionK$ integer-grid aligned $k$-cubes (\emph{tiles}) of side length $2$. 
These tiles may wrap around the boundary of the cube of side length $4$, exiting on one side and entering again on the opposite side (see \Cref{TilingsExamples}). More formally, each tile occupies the Minkowski sum of its bottom left corner and the cube $[0,2]^k$, component-wise modulo $4$.
This then defines a periodic tiling of $\Reals^\dimensionK$, as infinitely repeating the tiling of the cube of side length 4 fills~$\Reals^\dimensionK$.

\begin{example}
\label{TilingsExamples}
The following are four $4\Integer^2$-periodic tilings of $\Reals^2$. In the first and last tiling, all four pairs of adjacent tiles are twins.
\begin{center}
\begin{tikzpicture}[scale=0.8]
\drawTiling{0}{0}{0}{2}{2}{0}{2}{2}
\end{tikzpicture}
\begin{tikzpicture}[scale=0.8]
\drawTiling{0}{1}{0}{3}{2}{0}{2}{2}
\end{tikzpicture}
\begin{tikzpicture}[scale=0.8]
\drawTiling{1}{0}{0}{2}{3}{0}{2}{2}
\end{tikzpicture}
\begin{tikzpicture}[scale=0.8]
\drawTiling{1}{1}{1}{3}{3}{1}{3}{3}
\end{tikzpicture}
\end{center}
\end{example}

The set of coordinates of the lower left corners of the cubes in a $4\Integer^\dimensionK$-periodic tiling forms a set of $2^\dimensionK$ vectors in $\{0,1,2,3\}^\dimensionK$. \corradi{} and \szabo{}~\cite{corradi1990kellerconjecturestrings} described the conditions required for such a set to describe a valid $4\Integer^\dimensionK$-periodic tiling that is twin-free and thus a counterexample to Keller's conjecture.

In this model of sets of vectors describing $4\Integer^\dimensionK$-periodic tilings, various counterexamples for Keller's conjecture in higher dimensions were found. 
First, Lagarias and Shor discovered $12$- and $10$-dimensional counterexamples~\cite{lagarias1992keller}. 
Later, Mackey managed to improve on these ideas to achieve an $8$-dimensional counterexample~\cite{mackey2002eightdimensional}. 
While it was already known that Keller's conjecture is true for all $k\leq 6$ since 1940~\cite{perron1940kellersix}, the final case of $k=7$ has only been resolved recently when Brakensiek et al.~\cite{brakensiek2022kellerresolved} gave a computer-assisted proof that the conjecture holds for $k=7$.

In a short remark, Schurr~\cite{schurr2004phd} mentioned a one-to-one correspondence of \szabo{}'s $4\Integer^\dimensionK$-periodic tilings to USOs and that a tiling is a counterexample to Keller's conjecture if and only if the corresponding USO contains no flippable edges. 
This one-to-one correspondence was not further explained, so we provide the details here.

To describe valid $4\Integer^\dimensionK$-periodic tilings, Lagarias and Shor~\cite{lagarias1992keller} introduced the following graph:
\begin{definition}
\label{def:Gk}
For $k>0$, $G_k$ is the undirected graph with the vertex set $V(G_k) = \{0,1,2,3\}^k$.
Two vectors are adjacent if and only if in some coordinate their entries differ by exactly 2, i.e.,
\[ \{v,w\}\in E(G_\dimensionK) \Longleftrightarrow \exists i \in [\dimensionK]: \abs{v_i - w_i} = 2 . \]
\end{definition}

\begin{lemma}[Lagarias \& Shor~\cite{lagarias1992keller}]
\label{lem:Gk}
A set of vectors $\Clique \subseteq \{0,1,2,3\}^k$ describes the corners of the tiles of a $4\Integer^\dimensionK$-periodic cube tiling if and only if $\Clique$ induces a clique of size $2^\dimensionK$ in $G_\dimensionK$.
\end{lemma}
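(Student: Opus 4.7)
The plan is to translate the non-overlap and full-coverage conditions of a $4\Integer^k$-periodic tiling into combinatorial conditions on the corner set $K$, essentially coordinate by coordinate, and then to close the backward direction with a volume argument on the $4$-torus $\Reals^k/4\Integer^k$.

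The first step is to understand overlap of two tiles through their one-dimensional projections. A tile with lower-left corner $v$ projects onto coordinate $i$ as the arc $[v_i, v_i+2] \pmod 4$ on the circle $\Reals/4\Integer$ of circumference $4$. Two such arcs starting at $v_i, w_i \in \{0,1,2,3\}$ have interior-disjoint images if and only if $|v_i - w_i| = 2$; for the other possible values $0$, $1$, or $3$ the two arcs share a subinterval of positive length (possibly wrapping around $0\equiv 4$). Consequently, two full $k$-dimensional tiles with corners $v, w$ have disjoint interiors in the torus $\Reals^k/4\Integer^k$ if and only if there exists some coordinate $i$ with $|v_i - w_i| = 2$, which is precisely the edge condition $\{v,w\} \in E(G_k)$ of \Cref{def:Gk}.

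For the forward direction, a valid $4\Integer^k$-periodic tiling by definition consists of $2^k$ pairwise interior-disjoint tiles, so the set $K$ of their corners has size $2^k$ and every pair is adjacent in $G_k$; hence $K$ induces a clique of size $2^k$.

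For the backward direction, take a clique $K \subseteq \{0,1,2,3\}^k$ of size $2^k$ in $G_k$. By the coordinate-wise analysis, the $2^k$ associated tiles are pairwise interior-disjoint, so their total volume inside the torus $\Reals^k/4\Integer^k$ is exactly $2^k \cdot 2^k = 4^k$, matching the volume of the torus. Since the union of the closed tiles is closed and its complement is open of measure zero, the complement must be empty, so every point of $\Reals^k$ lies in some tile and we obtain a valid tiling. The only genuinely subtle point in the argument is the case analysis for the one-dimensional arc overlap when one of the arcs wraps across $0$; the volume-based coverage step is otherwise routine once interior-disjointness has been established.
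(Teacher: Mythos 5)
Your argument is correct. Note that the paper does not prove this lemma at all --- it is imported verbatim from Lagarias and Shor as a cited result --- so there is no ``paper proof'' to compare against; what you have written is a valid self-contained justification. The two load-bearing steps both check out: (i) the one-dimensional observation that the closed arcs $[v_i,v_i+2]$ and $[w_i,w_i+2]$ on $\R/4\Z$ have disjoint interiors exactly when $\abs{v_i-w_i}=2$ (for differences $0$, $1$, $3$ they share a subarc of length $2$, $1$, $1$ respectively), combined with the fact that interiors of product sets intersect iff all coordinate factors' interiors intersect, correctly identifies pairwise interior-disjointness of tiles on the torus with the edge relation of $G_k$; and (ii) the volume argument ($2^k$ interior-disjoint tiles of volume $2^k$ in a torus of volume $4^k$ leave an open complement of measure zero, hence empty) correctly upgrades a $2^k$-clique to full coverage. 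Passing to the torus $\R^k/4\Z^k$ is also the right move, since it packages the wrap-around of tiles and the $4\Z^k$-periodicity of the resulting tiling of $\R^k$ into a single picture. The only cosmetic remark is that in the forward direction you should note (as you implicitly do) that interior-disjointness forces the $2^k$ corner vectors to be pairwise distinct, so that one genuinely obtains a clique on $2^k$ vertices rather than a smaller set.
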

\Cref{lem:Gk} for tilings and \Cref{thm:szabowelzl} for USOs, respectively, are very similar: They both specify a condition that holds for all pairs of vectors (or vertices, respectively). The conditions both require the existence of a dimension with a certain property. We will now use this similarity to prove our desired bijection.

\begin{lemma}
\label{lem:uso=tiling}
There is a bijection between $4\Integer^\dimensionK$-periodic cube tilings and $k$-dimensional USOs.
\end{lemma}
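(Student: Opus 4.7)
The plan is to exhibit an explicit bijection by encoding each vertex $v\in V(Q_k)$ together with the orientation bits $O(v)\in\{0,1\}^k$ into a single vector in $\{0,1,2,3\}^k$. Concretely, I would map an orientation $O$ to the set $K_O \coloneqq \{\Phi_O(v) : v\in V(Q_k)\}$, where $\Phi_O(v)_i \coloneqq 2v_i + O(v)_i$; the high bit of $\Phi_O(v)_i$ stores $v_i$ and the low bit stores $O(v)_i$, so $\Phi_O$ is injective and $|K_O| = 2^k$. The inverse map is simply $v_i = \lfloor t_i/2 \rfloor$ and $O(v)_i = t_i \bmod 2$ for the (eventually unique) element $t\in K$ projecting to $v$.

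The central calculation is a four-case check establishing that for distinct $v,w\in V(Q_k)$,
\[|\Phi_O(v)_i - \Phi_O(w)_i| = 2 \iff v_i \neq w_i \text{ and } O(v)_i = O(w)_i.\]
With this equivalence in hand, the Szabó-Welzl condition (\Cref{thm:szabowelzl}) and the $G_k$-adjacency condition of \Cref{lem:Gk} become direct translations of one another: a coordinate $i$ that witnesses Szabó-Welzl for $(v,w)$ is exactly a coordinate that witnesses $G_k$-adjacency for $(\Phi_O(v),\Phi_O(w))$, and vice versa. Hence $O$ is an USO iff $K_O$ is a clique of size $2^k$ in $G_k$, which by \Cref{lem:Gk} is precisely the condition of describing a $4\Integer^k$-periodic tiling.

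For the reverse direction, starting from a clique $K \subseteq \{0,1,2,3\}^k$ of size $2^k$, I would need to check that the projection $\pi: t\mapsto(\lfloor t_i/2\rfloor)_i$ is a bijection from $K$ onto $\{0,1\}^k$: if two elements of $K$ agreed under $\pi$, they would differ by at most $1$ in every coordinate, leaving no coordinate with difference $2$ and violating the clique property; a cardinality argument then upgrades injectivity to bijectivity. Once $\pi$ is a bijection, $O$ is well-defined, and the key equivalence above delivers the Szabó-Welzl condition for free. The main obstacle, such as it is, is the single design choice of the encoding: using $2v_i + O(v)_i$ rather than, e.g., $v_i + 2O(v)_i$ is what aligns ``differ by~$2$'' on the tiling side with ``same orientation, differing position'' on the USO side; after that choice, each direction of the bijection is essentially a routine translation.
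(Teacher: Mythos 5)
Your proposal is correct and follows essentially the same route as the paper: the encoding $\Phi_O(v)_i = 2v_i + O(v)_i$ is exactly the paper's map $P$, its inverse is the paper's pair $(F,O)$, and the four-case equivalence between ``differ by $2$ in coordinate $i$'' and ``$v_i\neq w_i$ and $O(v)_i=O(w)_i$'' is precisely how the paper translates the clique condition of \Cref{lem:Gk} into the Szab\'o--Welzl condition. Your explicit check that the projection $\pi$ is injective on a clique is a point the paper handles only implicitly, but it is the same argument.
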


\begin{proof}
We assume that $\Clique$ is a clique of size $2^\dimensionK$ in $G_\dimensionK$, and therefore corresponds to a $4\Integer^\dimensionK$-periodic cube tiling.
We can derive an orientation of the $\dimensionK$-dimensional hypercube $\Cube_\dimensionK$ in the following way.
We translate each vector~$v$ of $\Clique$ into a vertex of $\Cube_\dimensionK$ and its orientation by the two functions $F: \{0,1,2,3\}^\dimensionK \rightarrow V(\Cube_\dimensionK)$ and $\orientation: \{0,1,2,3\}^\dimensionK \rightarrow \{0, 1\}^\dimensionK$  with
\begin{align*}
    F(v)_i \coloneqq 
    \begin{cases}
    0 & \text{if } v_i = 0 \text{ or } v_i=1, \\
    1 & \text{if } v_i = 2 \text{ or } v_i=3, \\
    \end{cases}
\end{align*}
and 
\begin{align*}
    \orientation(v)_i \coloneqq 
    \begin{cases}
    0 & \text{if } v_i = 0 \text{ or }  v_i = 2, \\
    1 & \text{if } v_i = 1 \text{ or }  v_i = 3.\\
    \end{cases}
\end{align*}

We must prove that this indeed yields an USO.
For every two vectors $v, w \in \Clique$, it holds that they differ by exactly two in at least one element, say in position $i$. 
It follows that $F(v)_i \neq F(w)_i$ and the image of no two elements of $\Clique$ are the same vertex of $\Cube_\dimensionK$. 
Furthermore, it holds that $\orientation(v)_i = \orientation(w)_i$ and thus the condition of \Cref{thm:szabowelzl} is fulfilled for every two vertices.
This shows that the transformation of \(\Clique\) is an USO.

In the opposite direction, we must show that every USO $O$ on $Q_k$ corresponds to a clique of size $2^\dimensionK$ in $G_\dimensionK$.
Let $P: V(\Cube_\dimensionK)\times \{0,1\}^\dimensionK \rightarrow \{0, 1, 2, 3\}^\dimensionK$ be a function that maps vertices $v$ of the hypercube and their orientation $O(v)$ to a vector ($P$ is simply the inverse of $(F,O)$).
\begin{align*}
    P(v,o)_i \coloneqq 
    \begin{cases}
    0 & \text{if } v_i = 0 \text{ and } o_i = 0, \\
    1 & \text{if } v_i = 0 \text{ and } o_i = 1, \\
    2 & \text{if } v_i = 1 \text{ and } o_i = 0, \\
    3 & \text{if } v_i = 1 \text{ and } o_i = 1.
    \end{cases}
\end{align*}
For all pairs of vertices $v, w  \in \Cube_k$ with $v\neq w$, let $i$ be an index fulfilling the condition of \Cref{thm:szabowelzl}. Without loss of generality, we assume $v_i=0$ and thus $w_i=1$.
By \Cref{thm:szabowelzl}, the $i$-edges incident to $v$ and $w$ are both downwards or both upwards oriented. In the first case, $P(v,O(v))_i=0$ and $P(w,O(w))_i=2$. In the second case, $P(v,O(v))_i=1$ and $P(w,O(w))_i=3$. In both cases, $P(v,O(v))_i=P(w,O(w))_i-2$, and therefore $P(v,O(v))$ and $P(w,O(w))$ are adjacent in $G_\dimensionK$.

As the pair of functions $(F,O)$ and the function $P$ are bijections between $V(Q_k)\times \{0,1\}^k$ and $\{0,1,2,3\}^k$, they establish a bijection between $4\Integer^\dimensionK$-periodic tilings and USOs.
\end{proof}

In the following, we use the terms $4\Integer^\dimensionK$-periodic cube tiling and USO interchangeably and interpret a set of strings in $\{0,1,2,3\}^k$ as an USO according to the construction of the functions $F$ and~$O$ in the proof of \cref{lem:uso=tiling}.

\begin{example} The following USOs of $Q_2$ correspond to the tilings in \Cref{TilingsExamples}.

\begin{minipage}{0.95\textwidth}
\begin{center}
\begin{tikzpicture}[scale=0.8]
\drawUSO{0}{0}{0}{2}{2}{0}{2}{2}
\end{tikzpicture}\qquad
\begin{tikzpicture}[scale=0.8]
\drawUSO{0}{1}{0}{3}{2}{0}{2}{2}
\end{tikzpicture}\qquad
\begin{tikzpicture}[scale=0.8]
\drawUSO{1}{0}{0}{2}{3}{0}{2}{2}
\end{tikzpicture}\qquad
\begin{tikzpicture}[scale=0.8]
\drawUSO{1}{1}{1}{3}{3}{1}{3}{3}
\end{tikzpicture}
\end{center}
\end{minipage}
\end{example}

Two cubes $v,w\in\{0,1,2,3\}^\dimensionK$ in a $4\Integer^\dimensionK$-periodic tiling are twins if and only if they differ in only one coordinate. Note that in the USO this corresponds to $F(v)$ and $F(w)$ being neighbors and~$O(v)=O(w)$, thus the edge $\{F(v),F(w)\}$ being flippable.

\section{Rewriting Rules}\label{sec:thebox}

To disprove Keller's Conjecture, i.e., to construct an USO with no flippable edges, Lagarias and Shor~\cite{lagarias1992keller} use string rewriting to create higher dimensional tilings from lower dimensional tilings.
In this section, we generalize their technique to operations that can be applied to all USOs, so-called \emph{generalized rewriting rules}. Unlike Lagarias and Shor, we are also producing USOs with flippable edges.

We first define \emph{simple rewriting rules}, which rewrite a single digit in every string of an USO. For this, we need four lists, which specify what to replace each possible digit with. From Lagarias and Shor's approach we extract the conditions necessary to hold for these four lists, such that we can prove that the result is again an USO. We will later describe in \Cref{subsec:intuition} that in essence, a simple rewriting rule replaces each edge of some dimension $h$ by one of two $d$-dimensional USOs, depending on the orientation of the edge.

\begin{definition}
\label{def:rewritingRule}
	Let $S^{(0)}$, $S^{(1)}$, $S^{(2)},S^{(3)}\subseteq\{0,1,2,3\}^\dimension$ with the properties that
	\begin{itemize}
		\item[(i)] $\left(S^{(0)} \cup S^{(2)}\right)$ defines a $\dimension$-dimensional USO (a $4\Integer^\dimension$-periodic tiling) and $S^{(0)} \cap S^{(2)} = \emptyset$, and
		\item[(ii)] $\left(S^{(1)} \cup S^{(3)}\right)$ defines a $\dimension$-dimensional USO (a $4\Integer^\dimension$-periodic tiling) and $S^{(1)} \cap S^{(3)} = \emptyset$.
	\end{itemize}
	
	The sets $\left(S^{(0)},S^{(1)},S^{(2)},S^{(3)}\right)$ define a \emph{simple rewriting rule}. 
	We define the function $S_h$ to \emph{apply} this simple rewriting rule to a $k$-dimensional input USO $K$ on dimension $h\in [k]$.
	It maps subsets of $\{0,1,2,3\}^k$ to subsets of $\{0,1,2,3\}^{k+d-1}$.
	Applying the simple rewriting rule to a single vertex of the input USO is defined as follows:
	\[S_h(v) \coloneqq \left\{ v_{1}, \dots, v_{h-1}, s_1, \dots, s_d , v_{h+1}, \dots, v_\dimensionK \;|\; s\in S^{(v_h)}\right\}.\]
	We write $S_h(K)$ (for a set $K\subseteq \{0,1,2,3\}^\dimensionK$) for the union of the outputs of $S_h$ when applied to all elements of $K$, i.e., $S_h(K) \coloneqq \bigcup_{v\in K} S_h(v)$.
\end{definition}
For each string $v\in\Clique$, $S_h(v)$ produces a set of strings which depends on the value of the entry~$v_h$. 
For each element $s$ of~$S^{(v_h)}$, a string is generated by replacing the entry $v_h$ with $s$. 
The single vertex~$v$ is thus mapped to~$|S^{(v_h)}|$ vertices.

Note that some of the sets $S^{(m)}$ may be empty. 
In this case, when $|S^{(v_h)}|=0$, no strings are produced from $v$. 
While $S^{(0)}$ and $S^{(2)}$ are disjoint (and similarly $S^{(1)}$ and $S^{(3)}$), it is possible that the same string appears in both $(S^{(0)} \cup S^{(2)})$ and $(S^{(1)} \cup S^{(3)})$.

Lagarias and Shor applied their rewriting rules to all dimensions at once~\cite{lagarias1992keller}. 
As defined, our rewriting rules are only applied to a single dimension $h$ at once. 
To recreate Lagarias and Shor's process, we can apply our rewriting rules to each dimension in sequence.

\begin{example}
The following is a rewriting rule for $\dimension=2$.

\begin{minipage}{0.45\textwidth}
\begin{center}
\begin{tabular}{c|c|c|c}
$S^{(0)}$ & $S^{(1)}$ & $S^{(2)}$ & $S^{(3)}$\\ \hline
01 & 11 & 03 & 33 \\
& 31 & 20 & 13 \\
&    & 22 &    \\
\end{tabular}
\end{center}
\end{minipage}\hfill
\begin{minipage}{0.45\textwidth}
\begin{tikzpicture}[scale=1]
\drawUSO{0}{1}{0}{3}{2}{0}{2}{2}
\node at (1, 1) {$S^{(0)} \cup S^{(2)}$};
\end{tikzpicture}
\begin{tikzpicture}[scale=1]
\drawUSO{1}{1}{1}{3}{3}{1}{3}{3}
\node at (1, 1) {$S^{(1)} \cup S^{(3)}$};
\end{tikzpicture}
\end{minipage}

\noindent	
We apply this rewriting rule to dimension $h=1$ of the $2$-dimensional USO $\Clique = \{10, 30, 02, 22 \}$.
This means, we replace the first coordinate of each vertex.

\noindent
The result is $S_{1}(\Clique)= \{ 110, 310, 130, 330, 012, 032, 202, 222\}$.
\begin{figure}[h!]
\centering
\renewcommand{\vColorA}{red}
\renewcommand{\vColorB}{blue}
\renewcommand{\vColorC}{green}
\renewcommand{\vColorD}{orange}
\begin{tikzpicture}[scale=1]
\drawUSO{1}{0}{0}{2}{3}{0}{2}{2}
\path[thick,<->] (0,-0.5,0) edge node[below] {$h=1$} (2,-0.5, 0);
\node at (1, 1) {$K$};
\node at (4.5, 0.75) {$\Rightarrow$};
\node at (4.5, 1.25) {Apply rewriting rule};
\end{tikzpicture}
\begin{tikzpicture}
\renewcommand{\voffset}{0.5}
\node at (0, 0, 0) {};
\node (110) at (0, 0 + \voffset, 0) {\textcolor{\vColorA}{11}0};
\node (310) at (2, 0 + \voffset, 0) {\textcolor{\vColorA}{31}0};
\node (130) at (0, 2 + \voffset, 0) {\textcolor{\vColorC}{13}0};
\node (330) at (2, 2 + \voffset, 0) {\textcolor{\vColorC}{33}0};

\node (012) at (0, 0 + \voffset, -2) {\textcolor{\vColorB}{01}2};
\node (202) at (2, 0 + \voffset, -2) {\textcolor{\vColorD}{20}2};
\node (032) at (0, 2 + \voffset, -2) {\textcolor{\vColorD}{03}2};
\node (222) at (2, 2 + \voffset, -2) {\textcolor{\vColorD}{22}2};

\draw[very thick,postaction=->-] (110) -- (310);
\draw[very thick,postaction=->-] (110) -- (130);
\draw[very thick,postaction=-<-] (110) -- (012);

\draw[very thick,postaction=->-] (310) -- (330);
\draw[very thick,postaction=-<-] (310) -- (202);

\draw[very thick,postaction=-<-] (012) -- (202);
\draw[very thick,postaction=->-] (012) -- (032);

\draw[very thick,postaction=->-] (130) -- (330);
\draw[very thick,postaction=-<-] (130) -- (032);

\draw[very thick,postaction=-<-] (032) -- (222);
\draw[very thick,postaction=-<-] (330) -- (222);
\draw[very thick,postaction=-<-] (202) -- (222);

\end{tikzpicture}
\end{figure}
\end{example}
In the next lemma, we show that simple rewriting rules are correct USO constructions.

\begin{lemma}
\label{lem:rewritingResulsInUSO}
Applying a rewriting rule $S_h$ to an USO $\Clique$ of strings in $\{0,1,2,3\}^\dimensionK$ results in a valid USO $S_h(\Clique)$  of strings in $\{0,1,2,3\}^{\dimensionK+\dimension-1}$.
\end{lemma}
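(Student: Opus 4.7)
The plan is to verify the two conditions of \Cref{lem:Gk} for $S_h(\Clique)$: it induces a clique in $G_{k+d-1}$ of size exactly $2^{k+d-1}$. Given these, \Cref{lem:Gk} together with \Cref{lem:uso=tiling} immediately implies that $S_h(\Clique)$ is an USO.

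Each $u \in S_h(\Clique)$ arises from a pair $(v,s)$ with $v \in \Clique$ and $s \in S^{(v_h)}$ by substituting $s_1,\ldots,s_d$ in place of the coordinate $v_h$. I first argue that any two distinct pairs $(v,s)\neq (v',s')$ produce adjacent (hence distinct) vectors in $G_{k+d-1}$. If $v = v'$, then $s \neq s'$ and both lie in $S^{(v_h)}$, which by condition~(i) or~(ii) is a subset of a $d$-dimensional USO; \Cref{lem:Gk} then provides an inner coordinate of difference $2$. If $v \neq v'$, the USO property of $\Clique$ gives some $j \in [k]$ with $|v_j - v'_j| = 2$. When $j \neq h$ this difference is passed through to $u$ and $u'$ at the corresponding shifted coordinate. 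When $j = h$, the pair $\{v_h, v'_h\}$ is either $\{0,2\}$ or $\{1,3\}$, so $s \in S^{(v_h)}$ and $s' \in S^{(v'_h)}$ lie in disjoint halves of a single $d$-dimensional USO by conditions~(i)--(ii); hence $s \neq s'$, and \Cref{lem:Gk} again supplies the needed coordinate of difference $2$. This simultaneously establishes the clique property and the injectivity of the pair-to-vector map, giving $|S_h(\Clique)| = \sum_{v \in \Clique} |S^{(v_h)}|$.

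To evaluate this sum, pair up the two endpoints of each $h$-edge of $\Clique$. Since $O(v)_h = O(v \xor h)_h$ for every such edge, both endpoints have their $h$-coordinate in $\{0,2\}$ (a downward $h$-edge) or both in $\{1,3\}$ (an upward $h$-edge). Their joint contribution to the sum is $|S^{(0)}| + |S^{(2)}|$ in the first case and $|S^{(1)}| + |S^{(3)}|$ in the second, both equal to $2^d$ by conditions~(i) and~(ii). Summing over the $2^{k-1}$ $h$-edges of $Q_k$ yields $|S_h(\Clique)| = 2^{k-1}\cdot 2^d = 2^{k+d-1}$, as required.

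The hard part is the counting step. The individual sizes $|S^{(m)}|$ can be quite unbalanced, as in the worked example where $|S^{(0)}|=1$ and $|S^{(2)}|=3$, so one might initially worry that $|S_h(\Clique)|$ depends on the fine structure of the input $\Clique$. The resolution is that $\Clique$ always matches the two endpoints of every $h$-edge into the same one of the two $d$-dimensional USOs, so only the balanced pair-sums $|S^{(0)}|+|S^{(2)}|$ and $|S^{(1)}|+|S^{(3)}|$ ever appear in the count, both of which are forced to $2^d$ by the definition of a simple rewriting rule.
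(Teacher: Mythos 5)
Your proof is correct and follows essentially the same route as the paper's: verify the clique/adjacency condition in $G_{k+d-1}$ by splitting into the cases $v=v'$, a difference of $2$ outside coordinate $h$, and a difference of $2$ at coordinate $h$, and then count $|S_h(\Clique)|$ by pairing the endpoints of each $h$-edge so that each pair contributes exactly $2^d$ strings. Your closing remark about the unbalanced individual sizes $|S^{(m)}|$ is a nice piece of added intuition but does not change the argument.
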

\begin{proof}
We must show that every two strings in $S_h(K)$ differ by exactly $2$ in some coordinate, and that $|S_h(K)|=2^{k+d-1}$. By \Cref{lem:Gk,lem:uso=tiling}, $S_h(K)$ thus describes a $k+d-1$-dimensional USO.
Without loss of generality, we assume $h=k$. 

Let $v\in K$ be some string. Every two strings in $S_h(v)$ differ by $2$ in some coordinate, since \Cref{def:rewritingRule} requires $S^{(v_h)}$ to be part of an USO.

Now, let $v,w\in K$ be two distinct strings. Since \(\Clique\) is an USO, \(v\) and \(w\) differ by $2$ in some coordinate. 
If they differ by \(2\) in coordinate $i\not=h$, every $v' \in S_h(v)$ also differs from every $w' \in S_h(w)$ by $2$ in that same coordinate $i$. 
Otherwise, $v$ and $w$ differ by $2$ in coordinate~$h$. 
In this case $v'$ differs from $w'$ by $2$ in some coordinate $i\in\{h,\ldots,h+d-1\}$, since \Cref{def:rewritingRule} requires that the disjoint unions $S^{(0)}\cup S^{(2)}$ and $S^{(1)}\cup S^{(3)}$ are USOs.

At last, we show that $S_h(K)$ contains exactly $2^{k+d-1}$ strings. Observe that we showed above that $S_h(v)\cap S_h(w)=\emptyset$. 
We thus have $|S_h(K)|=\sum_{v\in K}|S_h(v)|$. We can group the summands according to the $h$-edges of the USO. For the two endpoints $v,w$ of an $h$-edge, $v_h$ and $w_h$ must differ by exactly $2$. Thus, $\big(S^{(v_h)}\cup S^{(w_h)}\big)$ forms a $d$-dimensional USO, and therefore $S^{(v_h)}$ and $S^{(w_h)}$ together contain $2^d$ strings. As there are $2^{k-1}$ many $h$-edges, we get $2^{k-1}2^d=2^{k+d-1}$ strings in total.
\end{proof}

\subsection{Rewriting Rules for \texorpdfstring{$d=1$}{d=1}}

Before we describe the full generality of rewriting rules, we give various rewriting rules with $d=1$ to build intuition for string rewriting operations.

\subsubsection{The Identity}
\begin{minipage}{0.3\textwidth}
\begin{center}
\begin{tabular}{c|c|c|c}
$S^{(0)}$ & $S^{(1)}$ & $S^{(2)}$ & $S^{(3)}$\\ \hline
0 & 1 & 2 & 3 \\
\end{tabular}
\end{center}
\end{minipage}\hfill
\begin{minipage}{0.7\textwidth}
This rewriting rule changes nothing, applying it to an USO results in the same USO again.
\end{minipage}

\subsubsection{Combing}
\begin{minipage}{0.3\textwidth}
\begin{center}
\begin{tabular}{c|c|c|c}
$S^{(0)}$ & $S^{(1)}$ & $S^{(2)}$ & $S^{(3)}$\\ \hline
0 & 0 & 2 & 2 \\ 
\end{tabular}
\end{center}
\end{minipage}\hfill
\begin{minipage}{0.7\textwidth}
This rewriting rules combs dimension $h$, making all $h$-edges oriented in the same direction (downwards).
\end{minipage}

\subsubsection{Flipping All Edges in One Dimension}
\label{sec:flipping}
\begin{minipage}{0.3\textwidth}
\begin{center}
\begin{tabular}{c|c|c|c}
$S^{(0)}$ & $S^{(1)}$ & $S^{(2)}$ & $S^{(3)}$\\ \hline
1 & 0 & 3 & 2 \\
\end{tabular}
\end{center}
\end{minipage}\hfill
\begin{minipage}{0.7\textwidth}
This rewriting rule replaces all upwards with downwards edges and all downwards with upwards edges. Applied to dimension $h$, this rule flips all $h$-edges (as in \cite[Lemma 2.1]{szabo2001usos}).
\end{minipage}\vskip\baselineskip

Until this point, the example rewriting rules mapped each vertex to the same vertex, and only possibly changed the orientation of the incident $h$-edge. This does not have to be the case, as we see in the remaining rewriting rules.
\subsubsection{Copying Upper \texorpdfstring{$h$}{h}-Facet}
\begin{minipage}{0.3\textwidth}
\begin{center}
\begin{tabular}{c|c|c|c}
$S^{(0)}$ & $S^{(1)}$ & $S^{(2)}$ & $S^{(3)}$\\ \hline
 &  & 0 & 1 \\
 &  & 2 & 3 \\
\end{tabular}
\end{center}
\end{minipage}\hfill
\begin{minipage}{0.7\textwidth}
Applied to dimension $h$, this rewriting rule copies the upper $h$-facet of the input USO into the lower $h$-facet.
The directions of the $h$-edges are unchanged. The lower facet can also be copied into the upper facet by a symmetrical rewriting rule.
\end{minipage}

\subsubsection{Swapping \texorpdfstring{$h$}{h}-Facets}
\label{sec:swapping}
\begin{minipage}{0.3\textwidth}
\begin{center}
\begin{tabular}{c|c|c|c}
$S^{(0)}$ & $S^{(1)}$ & $S^{(2)}$ & $S^{(3)}$\\ \hline
2 & 3 & 0 & 1 \\
\end{tabular}
\end{center}
\end{minipage}\hfill
\begin{minipage}{0.7\textwidth}
Applied to dimension $h$, this rewriting rule swaps the upper $h$-facet of the input USO and the lower $h$-facet.
The directions of the $h$-edges are unchanged.
\end{minipage}

\subsubsection{Partial Swap}
\label{sec:nonCubicSubgraphs}
\begin{minipage}{0.3\textwidth}
\begin{center}
\begin{tabular}{c|c|c|c}
$S^{(0)}$ & $S^{(1)}$ & $S^{(2)}$ & $S^{(3)}$\\ \hline
0 & 3 & 2 & 1 \\ 
\end{tabular}
\end{center}
\end{minipage}\hfill
\begin{minipage}{0.7\textwidth}
This rewriting rule swaps the induced subgraphs of the lower and upper $h$-facets of vertices incident to upwards $h$-edges.
\end{minipage}\vskip\baselineskip

This last operation had not been previously documented in the literature. This is a new USO-preserving operation, and we formalize it in the following theorem.

\begin{theorem}
\label{thm:swappingNonCubicSubgraphs}
Let $\Clique$ be a $k$-dimensional USO and $h\in [k]$ some dimension. 
Let $U\subset V(K)$ be the set of vertices in the upper $h$-facet of $\Clique$ that are incident to an upwards $h$-edge. 
Similarly, let $L\subset V(K)$ be the neighbors of $U$ in the lower $h$-facet. 
Now let $\Clique_U$ be the subgraph induced by $U$ and $\Clique_L$ be the subgraph induced by $L$. 
Note that these graphs might have more than one connected component.
Swapping these two subgraphs yields another orientation $\Clique'$, which is an USO.
\end{theorem}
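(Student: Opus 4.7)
The plan is to prove this theorem as a direct application of the partial-swap rewriting rule introduced in \Cref{sec:nonCubicSubgraphs}, together with \Cref{lem:rewritingResulsInUSO}. Specifically, I would identify the geometric operation described in the statement with the simple rewriting rule $(S^{(0)},S^{(1)},S^{(2)},S^{(3)}) = (\{0\},\{3\},\{2\},\{1\})$ applied to dimension $h$.

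First I would verify that this is a legitimate simple rewriting rule in the sense of \Cref{def:rewritingRule}. Via \Cref{lem:uso=tiling}, $S^{(0)}\cup S^{(2)}=\{0,2\}$ describes the $1$-dimensional USO with a single downward edge, and $S^{(1)}\cup S^{(3)}=\{1,3\}$ describes the $1$-dimensional USO with an upward edge; both intersections are trivially empty. Hence \Cref{lem:rewritingResulsInUSO} applies and guarantees that $S_h(K)$ is an USO.

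The remaining task is to decode what $S_h(K)$ looks like geometrically and verify it matches $K'$. Under the bijection of \Cref{lem:uso=tiling}, the value $v_h\in\{0,1,2,3\}$ simultaneously records the $h$-position and the $h$-edge direction of a vertex: $v_h=0,2$ correspond to downward $h$-edges in the lower and upper facet respectively, while $v_h=1,3$ correspond to upward $h$-edges in the lower and upper facet respectively. Thus $L$ is exactly the set of strings of $K$ with $v_h=1$, and $U$ the set with $v_h=3$. The rule swaps $1\leftrightarrow 3$ and fixes $0,2$, while leaving every other coordinate intact. Translated back through $(F,O)$, this means that every vertex in $L\cup U$ has its $h$-position toggled (with its incident upward $h$-edge preserved), no other vertex moves, and no orientation bit in any dimension other than $h$ is altered at any vertex. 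Since the subgraphs $K_U$ and $K_L$ are entirely determined by the non-$h$ orientation bits at the vertices in $U$ and $L$, and these bits travel wholesale with their vertices, the induced subgraphs at the $L$-positions and the $U$-positions in $S_h(K)$ are exactly $K_U$ and $K_L$, so the two subgraphs have been swapped exactly as claimed.

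The only real work is making this correspondence precise; the USO property is not proved by hand but imported from \Cref{lem:rewritingResulsInUSO}. In particular, no case analysis of $2$-faces is needed to check consistency on the $h$-edges themselves or on the ``crossing'' edges between $U\cup L$ and its complement---that consistency is exactly what the rewriting-rule framework buys us.
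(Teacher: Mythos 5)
Your proposal is correct and follows exactly the paper's own proof: the paper likewise identifies the swap with the simple rewriting rule $(S^{(0)},S^{(1)},S^{(2)},S^{(3)})=(\{0\},\{3\},\{2\},\{1\})$ applied to dimension $h$ and imports the USO property from \cref{lem:rewritingResulsInUSO}. Your decoding of why this rule realizes the geometric swap (via the $1\leftrightarrow 3$ exchange under the bijection of \cref{lem:uso=tiling}) is in fact spelled out in more detail than in the paper, and it is accurate.
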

\begin{proof}
Applying the rewriting rule $(S^{(0)}=\{0\}, S^{(1)}=\{3\}, S^{(2)}=\{2\},S^{(3)}=\{1\})$ to dimension~$h$ of $\Clique$ yields $\Clique'$, since vertices incident to downwards $h$-edges are left unchanged and vertices incident to upwards $h$-edges are moved to the opposite $h$-facets. This rewriting rule fulfills all conditions of \cref{def:rewritingRule}, and thus the result of applying it to an USO must be an USO again by \cref{lem:rewritingResulsInUSO}.
\end{proof}

\begin{example}
We apply the rewriting rule $(S^{(0)}=\{0\}, S^{(1)}=\{3\}, S^{(2)}=\{2\},S^{(3)}=\{1\})$ to the USO $K = \{ 110, 310,  012, 202, 031, 230, 033, 222 \}$ in dimension $h=2$, i.e., we rewrite the second coordinate of each vertex. In the resulting USO, the subgraphs $K_L$ and $K_U$ swapped places.

\renewcommand{\vColorA}{red}
\renewcommand{\vColorB}{blue}
\newcommand{\offset}{6}
\newcommand{\opacity}{0.15}
\begin{center}
\begin{tikzpicture}[scale=0.85]
\node at (-1.3,0,0) {};
\node at (1.3+2.3+\offset+\offset,2.3,-2.3) {};
\path[thick,<->] (-0.7,0,0) edge node[left] {$h$} (-0.7,2.3,0);

\node (110a) at (0, 0, 0) {110};
\node (310a) at (2.3, 0, 0) {310};
\node (031a) at (0, 2.3, 0) {031};
\node (230a) at (2.3, 2.3, 0) {230};

\node (012a) at (0, 0, -2.3) {012};
\node (202a) at (2.3, 0, -2.3) {202};
\node (033a) at (0, 2.3, -2.3) {033};
\node (222a) at (2.3, 2.3, -2.3) {222};

\draw[very thick,postaction=->-] (110a) -- (310a);
\draw[very thick,postaction=->-] (110a) -- (031a);
\draw[very thick,postaction=-<-] (110a) -- (012a);

\draw[very thick,postaction=->-] (310a) -- (230a);
\draw[very thick,postaction=-<-] (310a) -- (202a);

\draw[very thick,postaction=-<-] (012a) -- (202a);
\draw[very thick,postaction=->-] (012a) -- (033a);

\draw[very thick,postaction=-<-] (031a) -- (230a);
\draw[very thick,postaction=->-] (031a) -- (033a);

\draw[very thick,postaction=-<-] (033a) -- (222a);
\draw[very thick,postaction=-<-] (230a) -- (222a);
\draw[very thick,postaction=-<-] (202a) -- (222a);

\draw[\vColorA, line width=17pt, line cap = round, opacity=\opacity, rounded corners = 0.05pt] (0,0, -2.5) -- (0, 0, 0) -- (2.5, 0, 0);
\draw[\vColorB, line width=17pt, line cap = round, opacity=\opacity, rounded corners = 0.05pt] (0,2.3, -2.5)--(0, 2.3, 0) -- (2.5, 2.3, 0);

\node at (3.5,0,0) {\textcolor{\vColorA}{$\Clique_L$}};
\node at (-1,2.5,-2) {\textcolor{\vColorB}{$\Clique_U$}};

\node at (4.5, 1,0) {$\Rightarrow$};
\node at (4.5, 2, 0) {\setlength\extrarowheight{-3pt}\begin{tabular}{c} Apply \\ rewriting \\ rule \end{tabular}};

\node (110b) at (0+\offset, 0, 0) {130};
\node (310b) at (2.3+\offset, 0, 0) {330};
\node (031b) at (0+\offset, 2.3, 0) {011};
\node (230b) at (2.3+\offset, 2.3, 0) {210};

\node (012b) at (0+\offset, 0, -2.3) {{032}};
\node (202b) at (2.3+\offset, 0, -2.3) {202};
\node (033b) at (0+\offset, 2.3, -2.3) {013};
\node (222b) at (2.3+\offset, 2.3, -2.3) {222};

\draw (310b) -- (110b);
\draw (110b) -- (031b);
\draw (110b) -- (012b);

\draw (310b) -- (230b);
\draw (310b) -- (202b);

\draw (012b) -- (202b);
\draw (012b) -- (033b);

\draw (031b) -- (230b);
\draw (033b) -- (031b);

\draw (033b) -- (222b);
\draw (230b) -- (222b);
\draw (202b) -- (222b);

\draw[\vColorA, line width=17pt, line cap = round, opacity=\opacity, rounded corners = 0.05pt] (0+\offset,0, -2.5) -- (0+\offset, 0, 0) -- (2.5+\offset, 0, 0);
\draw[\vColorB, line width=17pt, line cap = round, opacity=\opacity, rounded corners = 0.05pt] (0+\offset,2.3, -2.5)--(0+\offset, 2.3, 0) -- (2.5+\offset, 2.3, 0);

\node at (4.5+\offset, 1,0) {$\Rightarrow$};
\node at (4.5+\offset, 2, 0) {\setlength\extrarowheight{-3pt}\begin{tabular}{c} Fix \\ vertex \\ position \end{tabular}};

\node (011c) at (0+\offset+\offset, 0, 0) {011};
\node (210c) at (2.3+\offset+\offset, 0, 0) {210};
\node (130c) at (0+\offset+\offset, 2.3, 0) {130};
\node (330c) at (2.3+\offset+\offset, 2.3, 0) {330};

\node (013c) at (0+\offset+\offset, 0, -2.3) {013};
\node (202c) at (2.3+\offset+\offset, 0, -2.3) {202};
\node (032c) at (0+\offset+\offset, 2.3, -2.3) {032};
\node (222c) at (2.3+\offset+\offset, 2.3, -2.3) {222};

\draw[very thick,postaction=-<-] (011c) -- (210c);
\draw[very thick,postaction=->-] (011c) -- (130c);
\draw[very thick,postaction=->-] (011c) -- (013c);

\draw[very thick,postaction=->-] (210c) -- (330c);
\draw[very thick,postaction=-<-] (210c) -- (202c);

\draw[very thick,postaction=-<-] (013c) -- (202c);
\draw[very thick,postaction=->-] (013c) -- (032c);

\draw[very thick,postaction=->-] (130c) -- (330c);
\draw[very thick,postaction=-<-] (130c) -- (032c);

\draw[very thick,postaction=-<-] (032c) -- (222c);
\draw[very thick,postaction=-<-] (330c) -- (222c);
\draw[very thick,postaction=-<-] (202c) -- (222c);

\draw[\vColorB, line width=17pt, line cap = round, opacity=\opacity, rounded corners = 0.05pt] (0+\offset+\offset,0, -2.5) -- (0+\offset+\offset, 0, 0) -- (2.5+\offset+\offset, 0, 0);
\draw[\vColorA, line width=17pt, line cap = round, opacity=\opacity, rounded corners = 0.05pt] (0+\offset+\offset,2.3, -2.5)--(0+\offset+\offset, 2.3, 0) -- (2.5+\offset+\offset, 2.3, 0);

\node at (3.5+\offset+\offset,0,0) {\textcolor{\vColorB}{$\Clique_U$}};
\node at (-1+\offset+\offset,2.5,-2) {\textcolor{\vColorA}{$\Clique_L$}};

\end{tikzpicture}
\end{center}


\end{example}

\subsection{Generalized Rewriting Rules}

To arrive at their counterexamples to Keller's conjecture, Lagarias and Shor require a more general rewriting technique~\cite{lagarias1992keller}. In fact, we can prove that simple rewriting rules are not able to generate USOs without flippable edges only from USOs with flippable edges. Simple rewriting rules are thus not expressive enough to be universal.

\begin{lemma}
Applying a simple rewriting rule in which $\left(S^{(0)}\cup S^{(2)}\right)$ and $\left(S^{(1)}\cup S^{(3)}\right)$ contain at least one flippable edge to an USO $K$ which contains at least one flippable edge yields an USO that also contains at least one flippable edge.
\end{lemma}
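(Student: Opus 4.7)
The plan is case analysis on the form of a flippable edge of $K$. By \Cref{lem:uso=tiling} and the remark at the end of \Cref{sec:background}, an edge of $K$ is flippable precisely when the two corresponding strings in $\{0,1,2,3\}^k$ agree on every coordinate except one, where they differ by~$2$. Fix such a pair $v, w \in K$ agreeing outside some coordinate $j$, and I will produce a flippable edge of $S_h(K)$.

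\textbf{Case A ($j = h$).} Without loss of generality $\{v_h, w_h\} = \{0, 2\}$ (the case $\{1,3\}$ is symmetric via $S^{(1)} \cup S^{(3)}$). By hypothesis $S^{(0)} \cup S^{(2)}$ contains a flippable edge, say strings $s, t$ differing in exactly one coordinate of $\{1,\ldots,d\}$ by $2$. If $s, t$ both lie in $S^{(0)}$, the two strings obtained by substituting $s$ and $t$ for the $h$-coordinate of $v$ lie in $S_h(v) \subseteq S_h(K)$ and still differ only in that single coordinate, hence form a flippable edge of $S_h(K)$. The case $s, t \in S^{(2)}$ is identical using $w$ in place of $v$. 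If $s \in S^{(0)}$ and $t \in S^{(2)}$, substitute $s$ into $v$ and $t$ into $w$; the resulting strings agree on every coordinate outside the inserted block (since $v$ and $w$ did outside of $h$) and disagree only at the single coordinate inside the block where $s$ and $t$ differ.

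\textbf{Case B ($j \neq h$).} Then $v_h = w_h =: a$. If $S^{(a)} \neq \emptyset$, pick any $s \in S^{(a)}$: substituting $s$ into both $v$ and $w$ yields two strings of $S_h(K)$ whose only disagreement is the (shifted) coordinate $j$, a flippable edge. If $S^{(a)} = \emptyset$, let $v' \in K$ be the unique $h$-neighbor of $v$, so $v'_h$ is the partner of $a$ in whichever of $\{0,2\}$ or $\{1,3\}$ contains $a$. The USO $S^{(a)} \cup S^{(v'_h)}$ has a flippable edge $\{s, t\}$ by hypothesis, and since $S^{(a)}$ is empty this edge lies entirely inside $S^{(v'_h)}$. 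Substituting $s$ and $t$ into $v'$ yields two strings of $S_h(v') \subseteq S_h(K)$ differing only where $s$ and $t$ do, a flippable edge.

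The only real subtlety is the second half of Case~B: when $S^{(v_h)} = \emptyset$, the flippable edge $\{v, w\}$ of $K$ is erased from the output entirely, so the flippable edge of $S_h(K)$ must be borrowed from the rewriting-set hypothesis instead. The role of $K$'s flippable edge is then merely to guarantee a vertex $v$ with $v_h = a$, whose $h$-neighbor $v'$ has a nonempty rewriting set $S^{(v'_h)}$ carrying the required flippable edge. Beyond this pivot, the remaining work is careful coordinate bookkeeping.
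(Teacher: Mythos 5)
Your proof is correct and follows essentially the same case split as the paper's (whether the special coordinate of $K$'s flippable edge equals $h$ or not). You are in fact slightly more careful than the paper in one spot: the paper's argument for the $j\neq h$ case tacitly assumes $S^{(v_h)}\neq\emptyset$ when it picks corresponding strings $v'\in S_h(v)$ and $w'\in S_h(w)$, whereas your pivot to the $h$-neighbor $v'$ and the flippable edge guaranteed inside $S^{(v'_h)}$ explicitly covers the (permitted) situation where $S^{(v_h)}$ is empty and the original flippable edge is erased from the output.
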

\begin{proof}
Let $S_h$ be a function applying such a rewriting rule in dimension $h$. Let $K$ be the input USO, containing a flippable edge $\{v,v\xor i\}$. All the edges incident to $v$ and $w:=v\xor i$ therefore have the same orientation. We distinguish two cases, depending on $h$.

First, assume $h\not=i$. It must hold that $v_h=w_h$, and thus the same set $S^{(v_h)}=S^{(w_h)}$ is used to rewrite both of these strings.
Let $v'\in S_h(v)$ and let $w'\in S_h(w)$ be the corresponding string, obtained by using the same $s\in S^{(v_h)}$. Then, $v'$ and $w'$
form a flippable edge $\{v',w'\}$.

Second, assume $h=i$. Then, $S_h(\{v,v\xor i\})$ is a hypervertex in $S_h(K)$, and equal to one of the two USOs $\left(S^{(0)}\cup S^{(2)}\right)$ or $\left(S^{(1)}\cup S^{(3)}\right)$. Since both of these USOs are assumed to contain a flippable edge, so must $S_h(K)$.
\end{proof}

To circumvent this issue, Lagarias and Shor do not only use the four digits ${0,1,2,3}$ in their input clique, but also $0'$ and $1'$. We generalize our construction based on this idea, by letting the input USO specify one of $i$ labels at each vertex. 

For the full generality of our rewriting framework, the sets $S^{(m)}$ are replaced by a list of $i$ sets~$S^{(m)}_{1,\ldots,i}$ each, where the indices correspond to the possible labels attached to the vertices of the input USO. All the compatibility requirements are appropriately expanded.

\begin{definition}\label{def:generalizedRewritingRule}
Let $i \in \mathbb{N}$, \(S^{(0)}_{1, \ldots, i}, S^{(1)}_{1,\ldots,i}, S^{(2)}_{1, \ldots, i}, S^{(3)}_{1,\ldots i} \subseteq \{0,1,2,3\}^\dimension\) with the properties that
\begin{itemize}
		\item[(i)] $\left(S^{(0)}_{j} \cup S^{(2)}_{j'}\right)$ defines a $\dimension$-dimensional USO and $S^{(0)}_{j} \cap S^{(2)}_{j'} = \emptyset$ for all pairs \(j,j' \in [i]\), and
		\item[(ii)] $\left(S^{(1)}_{j} \cup S^{(3)}_{j'}\right)$ defines a $\dimension$-dimensional USO and $S^{(1)}_{j} \cap S^{(3)}_{j'} = \emptyset$ for all pairs \(j,j' \in [i]\).
	\end{itemize}

The sets \(\left(S^{(0)}_{1, \ldots, i}, S^{(1)}_{1,\ldots,i}, S^{(2)}_{1, \ldots, i}, S^{(3)}_{1,\ldots i}\right)\) define a \emph{generalized rewriting rule}. 
We define the function~$T_h$ to \emph{apply} this generalized rewriting rule to a $k$-dimensional input USO $K$ on dimension~$h\in [k]$.
It maps subsets of \(\{0,1,2,3\}^k \times [i]\) to subsets of $\{0,1,2,3\}^{k+d-1}$.
Applying the generalized rewriting rule to a single vertex $v$ labelled $j$ of the input USO is defined as follows:
\begin{align*}
T_h (v,j) \coloneqq \left\{v_1, \ldots, v_{h-1}, t_1, \ldots, t_d, v_{h+1}, \ldots, v_{k} \mid t \in S^{(v_h)}_{j}\right\}.    
\end{align*}
Similar to \Cref{def:rewritingRule} we extend the function $T_h$ from single inputs to sets: For a set \mbox{$K\subseteq\{0,1,2,3\}^\dimensionK$} and a labelling function $L:K\to [i]$, $T_h(K,L)\coloneqq \bigcup_{v\in K}T_h(v,L(v))$.
\end{definition}

Note that we can use duplicate sets $S^{(m)}_j=S^{(m)}_{j'}$ in case we want to have fewer than $i$ sets for~$m \in \{0,1,2,3\}$. We next state a version of \Cref{lem:rewritingResulsInUSO} for generalized rewriting rules.
\begin{lemma}
Let \(K\) be an USO of strings in $\{0,1,2,3\}^\dimensionK$, and \mbox{\(L : K \to [i]\)} an additional labelling function. Then \(T_h(K,L)\) is an USO of strings in $\{0,1,2,3\}^{\dimensionK+\dimension-1}$.
\end{lemma}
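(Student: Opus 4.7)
The statement is the generalized-label analogue of \Cref{lem:rewritingResulsInUSO}, so the plan is to follow exactly the same structure as that proof, with the only modification being to keep track of the label $L(v)$ in the index of the set $S^{(v_h)}_{L(v)}$ used at each vertex. By \Cref{lem:Gk,lem:uso=tiling}, it suffices to show that $T_h(K,L)\subseteq \{0,1,2,3\}^{k+d-1}$ satisfies the two properties (a) every two distinct strings differ by $\pm 2$ in some coordinate, and (b) $|T_h(K,L)| = 2^{k+d-1}$. Without loss of generality take $h=k$, so that each string in $T_h(v, L(v))$ has the form $v_1\cdots v_{k-1} t_1\cdots t_d$ with $t\in S^{(v_h)}_{L(v)}$.

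For property (a), I would split into two cases. First, fix a single vertex $v\in K$ with label $j=L(v)$: two distinct strings in $T_h(v, j)$ share the prefix $v_1\cdots v_{k-1}$ and differ only in the $t$-suffix, and by the definition with the choice $j'=j$, the set $S^{(v_h)}_j\cup S^{(v_h\pm 2)}_j$ is a $d$-dimensional USO, hence strings inside $S^{(v_h)}_j$ pairwise differ by $\pm 2$ in some coordinate. Second, take two distinct vertices $v,w\in K$; by \Cref{thm:szabowelzl} (in the tiling formulation, \Cref{lem:Gk}) there is a coordinate $c$ in which $v$ and $w$ differ by $\pm 2$. If $c\neq h$, then this coordinate is preserved in every prefix/suffix, so every pair from $T_h(v,L(v))\times T_h(w, L(w))$ differs by $\pm 2$ at position $c$. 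If $c=h$, then $\{v_h,w_h\}$ is either $\{0,2\}$ or $\{1,3\}$, and condition (i) or (ii) applied with $j=L(v)$, $j'=L(w)$ guarantees that $S^{(v_h)}_{L(v)}\cup S^{(w_h)}_{L(w)}$ is a $d$-dimensional USO with the two sets disjoint, so any $t\in S^{(v_h)}_{L(v)}$ and $t'\in S^{(w_h)}_{L(w)}$ differ by $\pm 2$ in some coordinate inside $\{h,\ldots,h+d-1\}$.

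For property (b), the disjointness analysis in case (a) already shows $T_h(v,L(v))\cap T_h(w,L(w))=\emptyset$ whenever $v\neq w$: when $v,w$ differ in a coordinate outside $h$ the prefixes are distinct, and when they differ only in coordinate $h$ the required disjointness $S^{(v_h)}_{L(v)}\cap S^{(w_h)}_{L(w)}=\emptyset$ is exactly what the definition demands. Hence $|T_h(K,L)| = \sum_{v\in K} |S^{(v_h)}_{L(v)}|$, which I group over the $2^{k-1}$ many $h$-edges of $K$: for each $h$-edge $\{v,w\}$, the pair $S^{(v_h)}_{L(v)}\cup S^{(w_h)}_{L(w)}$ is a $d$-dimensional USO of size $2^d$, so the two endpoints jointly contribute $2^d$ strings. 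Summing yields $2^{k-1}\cdot 2^d = 2^{k+d-1}$, completing the count.

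The argument is essentially routine once the bookkeeping is set up. The only real subtlety, and the step I would state most carefully, is why the conditions must hold \emph{for all pairs} $j,j'\in[i]$ rather than just $j=j'$: when an $h$-edge crosses between vertices carrying different labels, the two endpoints use $S^{(v_h)}_{L(v)}$ and $S^{(w_h)}_{L(w)}$ with possibly $L(v)\neq L(w)$, and it is precisely the ``all pairs $j,j'$'' strengthening in \Cref{def:generalizedRewritingRule} over \Cref{def:rewritingRule} that supplies both the USO condition across the hypervertex and the disjointness needed for the counting. No other step presents any difficulty beyond reproducing the calculation of \Cref{lem:rewritingResulsInUSO}.
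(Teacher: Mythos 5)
Your proof is correct and follows exactly the route the paper takes: the paper's own proof simply states that the argument of \Cref{lem:rewritingResulsInUSO} applies \emph{mutatis mutandis} once one observes that \Cref{def:generalizedRewritingRule} supplies the disjointness and USO conditions for every pair of labels $j,j'$, which is precisely the bookkeeping you carry out explicitly. No issues.
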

\begin{proof}
The proof of \Cref{lem:rewritingResulsInUSO} applies \textit{mutatis mutandis} to this lemma, with the additional observation that \Cref{def:generalizedRewritingRule} guarantees the disjointness and coherence conditions required.
\end{proof}

Since every set $S_j^{(0)}$ must ``match'' every set $S_{j'}^{(2)}$, all the sets $S_j^{(0)}$ replacing the $0$ entries must have the same size, and must describe the same part of the cube. We formalize this intuition in the following observation.

\begin{observation}
\label{obs:sameUnorientedVertices}
For all $j, j' \in [i]$ and $m \in \{0,1,2,3\}$, the sets $S^{(m)}_j$ and $S^{(m)}_{j'}$ always contain the same vertices of the unoriented cube, i.e., $\left\{ F(s) \mid s \in S^{(m)}_j\right\} = \left\{ F(s') \mid  s' \in S^{(m)}_{j'}\right\}$ for~$F$ defined as in the proof of \cref{lem:uso=tiling}.
\end{observation}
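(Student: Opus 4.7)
The plan is to reduce all cases to a single short argument that exploits the disjoint-union conditions in \Cref{def:generalizedRewritingRule}. I will spell out the case $m=0$; the case $m=2$ is handled symmetrically using condition (i), and $m\in\{1,3\}$ use condition (ii) in exactly the same way.

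First I would establish the following preliminary claim: whenever $K'\subseteq\{0,1,2,3\}^d$ defines a $d$-dimensional USO, the restriction of $F$ to $K'$ is a bijection onto $V(Q_d)$. Since $|K'|=2^d=|V(Q_d)|$, it suffices to verify injectivity. By \Cref{lem:Gk}, any two distinct elements of $K'$ differ by exactly $2$ in some coordinate $c$, and then they are mapped by $F$ to vertices differing in coordinate $c$. This fact is essentially already contained in the proof of \Cref{lem:uso=tiling}.

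Given this, the main argument is immediate. Pick any auxiliary index $j''\in[i]$. By condition (i) of \Cref{def:generalizedRewritingRule}, both $S^{(0)}_j\cup S^{(2)}_{j''}$ and $S^{(0)}_{j'}\cup S^{(2)}_{j''}$ are $d$-dimensional USOs, and both unions are disjoint. Applying the preliminary claim to each, the corresponding $F$-images partition $V(Q_d)$, so
\[\{F(s) : s\in S^{(0)}_j\} \;=\; V(Q_d)\setminus\{F(s) : s\in S^{(2)}_{j''}\} \;=\; \{F(s) : s\in S^{(0)}_{j'}\}.\]
The middle expression depends only on $j''$, which yields the desired equality. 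For $m=2$, the same reasoning applies after fixing some $j''$ and comparing $S^{(0)}_{j''}\cup S^{(2)}_j$ with $S^{(0)}_{j''}\cup S^{(2)}_{j'}$; for $m\in\{1,3\}$, substitute condition (ii) and repeat verbatim. There is no real obstacle: the only mildly subtle step is the preliminary bijection claim, and it is a direct consequence of material already proved.
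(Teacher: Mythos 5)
Your proof is correct, and it formalizes exactly the reasoning the paper itself relies on: the paper states this as an unproved observation, justified only by the preceding remark that every $S^{(0)}_j$ must ``match'' every $S^{(2)}_{j'}$ to form a $d$-dimensional USO, which is precisely your argument via a common auxiliary index $j''$ and the fact that $F$ restricted to any $4\Integer^d$-periodic tiling is a bijection onto $V(Q_d)$. No gaps; the complementation step $\{F(s):s\in S^{(0)}_j\}=V(Q_d)\setminus\{F(s):s\in S^{(2)}_{j''}\}$ is valid because the union is disjoint and $F$ is injective on it.
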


\subsection{(Generalized) Rewriting Rules From the USO Viewpoint}\label{subsec:intuition}

In this section, we give some intuition about the effect of generalized rewriting rules on an USO.
Of course the space of rewriting rules is very large, but nevertheless, there are some general statements that can be made.
We focus on the \(2\)-faces of the USO containing $h$-edges, which illustrate the constraints and effects of the rewriting.

Let \(T_h\) be a generalized rewriting rule.
Let \(K\) be the input USO labelled by the function $L$, and let the vertices \(v_1, v_2, w_1, w_2\) be a \(2\)-face $f$ of \(K\), where \(\{v_1,v_2\}\) and \(\{w_1,w_2\}\) are $h$-edges.
The rewriting rule replaces those two $h$-edges by the $d$-dimensional USOs $T_h(\{v_1, v_2 \}, L)$ and $T_h(\{w_1, w_2 \}, L)$.
Instead of the edges $\{v_1, w_1\}$ and $\{v_2, w_2\}$, there are now $2^d$ new edges between  $T_h(\{v_1, v_2 \}, L)$ and~$T_h(\{w_1, w_2 \}, L)$ as can be seen in \Cref{fig:sketchOfRewritingRule}.
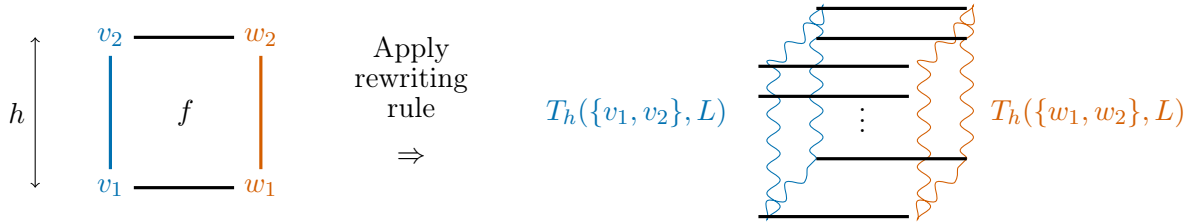
\begin{figure}[h!]
\centering
\begin{tikzpicture}[scale=2]
\newcommand{\myVAlign}{0}
\newcommand{\offset}{4.5}

\node(00) at (0, \myVAlign, 0.5) {\textcolor{blue}{$v_1$}};
\node(01) at (0, 1 +\myVAlign, 0.5) {\textcolor{blue}{$v_2$}};
\node(10) at (1, \myVAlign, 0.5) {\textcolor{red}{$w_1$}};
\node(11) at (1, 1+ \myVAlign, 0.5) {\textcolor{red}{$w_2$}};
\node at (0.5, 0.5 +\myVAlign, 0.5) {$f$};
\draw[very thick] (00) -- (10);
\draw[very thick, blue] (00) -- (01);
\draw[very thick, red] (10) -- (11);
\draw[very thick] (11) -- (01);
\draw[<->] (-0.5, \myVAlign, 0.5) --node[left] {$h$} (-0.5, 1+\myVAlign, 0.5);

\node at (2, 0.2+\myVAlign, 0.5) {$\Rightarrow$};
\node at (2, 0.7+\myVAlign,0.5) {\setlength\extrarowheight{-3pt}\begin{tabular}{c} Apply \\ rewriting \\ rule \end{tabular}};

\draw[color=blue,decorate,decoration=snake] (\offset, 0, 0) -- (\offset,1,0) -- (\offset,1,1) -- (\offset,0,1) -- (\offset,0,0);
\node at (-1+\offset, 0.5, 0.5) {\textcolor{blue}{$T_h(\{v_1, v_2 \}, L)$}};
\draw[color=red,decorate,decoration=snake] (1+\offset, 0, 0) -- (1+\offset,1,0) -- (1+\offset,1,1) -- (1+\offset,0,1) -- (1+\offset,0,0);
\node at (2+\offset, 0.5, 0.5) {\textcolor{red}{$T_h(\{w_1, w_2 \}, L)$}};

\node at (0.5+\offset, 0.5, 0.5) {$\vdots$};

\draw[very thick] (0+\offset,0,0) -- (1+\offset,0,0);
\draw[very thick] (0+\offset,0.8,0) -- (1+\offset,0.8,0);
\draw[very thick] (0+\offset,0.8,1) -- (1+\offset,0.8,1);
\draw[very thick] (0+\offset,1,0) -- (1+\offset,1,0);
\draw[very thick] (0+\offset,1,1) -- (1+\offset,1,1);
\draw[very thick] (0+\offset,0,1) -- (1+\offset,0,1);
\end{tikzpicture}
\caption{Sketch of the effect of the generalized rewriting rule on the $2$-face $f$.}
\label{fig:sketchOfRewritingRule}
\end{figure}

It holds that if $(v_1, v_2)$ is a downwards edge, $T_h(\{v_1, v_2 \}, L) = \left(S^{(0)}_{L(v_1)} \cup S^{(2)}_{L(v_2)}\right)$. 
If $(v_1, v_2)$ is an upwards edge,  $T_h(\{v_1, v_2 \}, L) = \left(S^{(1)}_{L(v_1)} \cup S^{(3)}_{L(v_2)}\right)$.
Analogously, the edge $(w_1, w_2)$ is replaced by the respective union of sets. 
In either case, this is guaranteed to be an USO by the conditions (i) and (ii) in Definition \ref{def:generalizedRewritingRule}.
The remaining information that requires clarification is the orientation of the edges between these USOs.

\paragraph{Case 1}
The $h$-edges have opposing orientations.

\begin{minipage}{0.15\textwidth}
\begin{tikzpicture}
\node(00) at (0, 0) {$v_1$};
\node(01) at (0, 1) {$v_2$};
\node(10) at (1, 0) {$w_1$};
\node(11) at (1, 1) {$w_2$};
\node at (0.5, 0.5) {$f$};
\draw[very thick] (00) -- (10);
\draw[very thick,postaction=-<-] (00) -- (01);
\draw[very thick,postaction=->-] (10) -- (11);
\draw[very thick] (11) -- (01);
\draw[<->] (-0.5, 0) --node[left] {$h$} (-0.5, 1);
\end{tikzpicture}
\end{minipage}\hfill
\begin{minipage}{0.8\textwidth}
\vspace{0.1em}
In this case $\{v_1, w_1\}$ and $\{v_2, w_2\}$ must have the same orientation, as $K$ would not be USO otherwise.
Thus, all connecting edges between $T_h(\{v_1, v_2 \}, L)$ and~$T_h(\{w_1, w_2 \}, L)$ are connected in a combed way according to the orientation of~$\{v_1, w_1\}$ and $\{v_2, w_2\}$. 
\end{minipage}

\paragraph{Case 2}
Both $h$-edges have the same orientation.

In this case $\{v_1, w_1\}$ and $\{v_2, w_2\}$ can (but not necessarily do) have opposing orientations.
Recalling \Cref{obs:sameUnorientedVertices}, note that all vertices in $T_h(v_1, L(v_1))$ are only connected to vertices in~$T_h(w_1, L(w_1))$, but not to vertices in~$T_h(w_2, L(w_2))$. Analogously, vertices in $T_h(v_2, L(v_2))$ are only connected to $T_h(w_2, L(w_2))$.

\begin{minipage}{0.15\textwidth}
\begin{tikzpicture}
\node(00) at (0, 0) {$v_1$};
\node(01) at (0, 1) {$v_2$};
\node(10) at (1, 0) {$w_1$};
\node(11) at (1, 1) {$w_2$};
\node at (0.5, 0.5) {$f$};
\draw[very thick] (00) -- (10);
\draw[very thick,postaction=-<-] (00) -- (01);
\draw[very thick,postaction=-<-] (10) -- (11);
\draw[very thick] (11) -- (01);
\draw[<->] (-0.5, 0) --node[left] {$h$} (-0.5, 1);
\end{tikzpicture}
\end{minipage}\hfill
\begin{minipage}{0.8\textwidth}
The edges between vertices from $T_h(v_1, L(v_1))$ and $T_h(w_1, L(w_1))$ are oriented the same way as $\{v_1, w_1\}$.
The edges between vertices from $T_h(v_2, L(v_2))$ and~$T_h(w_2, L(w_2))$ are oriented the same way as $\{v_2, w_2\}$.
\end{minipage}

\paragraph{}We summarize: In the resulting USO, any face spanned by the dimensions $\{h,\ldots, h+d-1\}$ is one of the USOs made up from two sets of the generalized rewriting rule. Furthermore, between any two such USOs the edges are either combed, or are split into upwards and downwards edges according to the split of the $\big(S^{(0)}_j\cup S^{(2)}_{j'} \big)$ into their parts, or according to the split of the $\big(S^{(0)}_j\cup S^{(2)}_{j'}\big)$ into their parts.


\section{Universality of the Construction}\label{sec:universality}

Our construction is universal, meaning it is sufficiently general to generate all USOs, using only the 1-dimensional USOs, and the $2$-dimensional ``bow'' as base cases.

\begin{theorem}[Universality]
\label{thm:repeatedApplication}
Starting with the set of both $1$-dimensional USOs $\{0,2\}$ and $\{1,3\}$, one can generate every USO of dimension $n\geq 1$ by repeated application of generalized rewriting rules to the bow $\{01,20,03,22\}$, where the sets used in each rewriting rule are partial orientations from the set of already obtained USOs.
\end{theorem}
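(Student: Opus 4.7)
I would proceed by induction on $n$, with the base case $n=1$ holding by hypothesis. For the inductive step, I assume that every USO of dimension at most $n-1$ has already been obtained, and show that an arbitrary $n$-dimensional USO $\Clique$ can be produced by a single application of a generalised rewriting rule $T_1$ to dimension $h=1$ of the bow, with $d=n-1$.

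Given $\Clique$, let $A, B \subseteq \{0,1,2,3\}^{n-1}$ denote the digit representations of its lower and upper $n$-facets; both are $(n-1)$-dimensional USOs and hence inductively available (for $n=2$ they lie in the initial collection of $1$-dimensional USOs). Let $P \subseteq \{0,1\}^{n-1}$ be the set of positions $u$ for which the $n$-edge between $(u,0)$ and $(u,1)$ in $\Clique$ points upwards. Write $A_P, A_{\overline P}, B_P, B_{\overline P}$ for the restrictions of the partial orientations $A$ and $B$ to $P$ and to its complement. I would define the generalized rewriting rule with $i=2$ labels as follows: a labelling $\lambda$ with $\lambda(01) = \lambda(20) := 1$ and $\lambda(03) = \lambda(22) := 2$; the sets $S^{(0)}_1 := A_P$, $S^{(0)}_2 := B_P$, $S^{(2)}_1 := A_{\overline P}$, $S^{(2)}_2 := B_{\overline P}$; and the sets $S^{(1)}_j, S^{(3)}_j$ (for $j=1,2$) chosen as any fixed bi-partition of an inductively-available $(n-1)$-dimensional USO. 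Unfolding $T_1(\mathrm{bow},\lambda)$ then shows that each bow vertex contributes precisely those strings of $\Clique$ whose last digit encodes the matching $n$-facet and $n$-edge direction; hence $T_1(\mathrm{bow},\lambda)=\Clique$.

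The main obstacle is verifying condition~(i) of \Cref{def:generalizedRewritingRule}, which must hold for \emph{all} pairs $(j,j') \in [2]^2$. The two diagonal pairs recover the USOs $A$ and $B$, but the two cross pairs produce hybrid orientations $A_P \cup B_{\overline P}$ and $B_P \cup A_{\overline P}$ which one must independently show to be $(n-1)$-dimensional USOs. I would prove this via \Cref{thm:szabowelzl}: for any $u \in P$ and $u' \in \overline P$, the vertices $(u,0)$ and $(u',1)$ of $\Clique$ have oppositely oriented $n$-edges by the definition of $P$, so their \szabo{}--Welzl witness in $\Clique$ cannot lie in dimension $n$ and therefore lies in some $i<n$; this same $i$ is then a \szabo{}--Welzl witness for the pair $u, u'$ in the hybrid $(n-1)$-cube. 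Condition~(ii) is automatic from the constant choice of $S^{(1)}_j$ and $S^{(3)}_j$, so the rule is a valid generalized rewriting rule, $\Clique$ is produced in a single step, and the induction closes.
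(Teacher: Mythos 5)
Your construction is exactly the paper's: the target USO's two $n$-facets are split into four sets according to the direction of the $n$-edges, two labels are used, and the rule is applied to dimension $1$ of the bow so that the surviving second coordinate of each bow vertex re-encodes the facet/edge-direction information. The only (minor) divergence is in verifying the cross pairs $A_P\cup B_{\overline P}$ and $B_P\cup A_{\overline P}$: the paper cites the partial-swap theorem (\cref{thm:swappingNonCubicSubgraphs}), whose facets these hybrids are, whereas you give the equivalent direct \szabo{}--Welzl witness argument; both are correct.
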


To prove this theorem, we show that for any $n$-dimensional USO, there exists a generalized rewriting rule which creates this USO by only using $(n-1)$-dimensional USOs and the bow.
From this, \Cref{thm:repeatedApplication} follows as a direct consequence.

\begin{lemma}\label{thm:universality}
Let $K$ be an $n$-dimensional USO. Then there exists a generalized rewriting rule \(\left(S^{(0)}_{1,2}, S^{(1)}_{1,2}, S^{(2)}_{1,2}, S^{(3)}_{1,2}\right)\), where each $S^{(m)}_j$ is a (partial) \((n-1)\)-dimensional USO, and
\[K=T_1(bow=\{01,20,03,22\},L), \text{ for } L(01)=2,L(03)=1,L(20)=2,L(22)=1.\]
\end{lemma}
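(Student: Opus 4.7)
The approach is to read off the required rewriting rule directly from $K$, exploiting the fact that applying $T_1$ to the bow with the given labelling produces strings whose last coordinate is fully determined by which bow vertex was used: $T_1(01,2)$, $T_1(20,2)$, $T_1(03,1)$, and $T_1(22,1)$ generate strings ending in $1$, $0$, $3$, and $2$ respectively. Hence the equation $T_1(bow,L)=K$ forces each image to equal the ``slice'' of $K$ indexed by the corresponding value of the $n$-th coordinate. Writing $K^{(m)}\coloneqq\{(v_1,\ldots,v_{n-1})\mid (v_1,\ldots,v_{n-1},m)\in K\}$, I would therefore define
\[S^{(0)}_2\coloneqq K^{(1)},\quad S^{(2)}_2\coloneqq K^{(0)},\quad S^{(0)}_1\coloneqq K^{(3)},\quad S^{(2)}_1\coloneqq K^{(2)},\]
so that $T_1(bow,L)=K$ follows immediately by unraveling definitions.

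I would then verify the requirements of \Cref{def:generalizedRewritingRule}. The disjointness conditions $S^{(0)}_j\cap S^{(2)}_{j'}=\emptyset$ reduce to $K^{(m)}\cap K^{(m')}=\emptyset$ for the pairs $\{m,m'\}\in\{\{0,1\},\{2,3\},\{0,3\},\{1,2\}\}$: the first two follow because a common element would witness two distinct $K$-strings over the same $F$-vertex in $Q_n$, and the last two because a common element would force the two endpoints of a single dimension-$n$ edge of $K$ to record inconsistent orientations. For the USO conditions, the unions $K^{(0)}\cup K^{(1)}$ and $K^{(2)}\cup K^{(3)}$ are precisely the lower and upper $n$-facets of $K$ re-encoded as $(n-1)$-dimensional USOs via \Cref{lem:uso=tiling}, hence are USOs. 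The sets $S^{(1)}_j$ and $S^{(3)}_j$ are never invoked by $T_1$ on the bow (no bow vertex has $v_1\in\{1,3\}$), so I would fix them as, e.g., the two halves along dimension~$1$ of the canonical $(n-1)$-dimensional USO, making the remaining conditions trivial.

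The main obstacle is showing that the two mixed unions $K^{(0)}\cup K^{(3)}$ and $K^{(1)}\cup K^{(2)}$ are also $(n-1)$-dimensional USOs; these correspond respectively to the heads and the tails of the dimension-$n$ edges of $K$, and are not facets in any standard sense. By \Cref{lem:Gk,lem:uso=tiling} it suffices to exhibit each set as a clique of size $2^{n-1}$ in $G_{n-1}$. The cardinality is immediate: each of the $2^{n-1}$ dimension-$n$ edges of $K$ contributes exactly one head (and exactly one tail), with pairwise distinct $F$-images in $Q_{n-1}$ after the last coordinate is stripped. For adjacency, take any two distinct $v,v'\in K$ with $v_n,v'_n\in\{0,3\}$; by \Cref{lem:Gk} they differ by $2$ in some coordinate $i$, and the crucial observation is that $i\neq n$, because $\{v_n,v'_n\}\subseteq\{0,3\}$ forces $|v_n-v'_n|\in\{0,3\}$ and never equals $2$. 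The stripped strings therefore differ by $2$ in some coordinate $i<n$ and are adjacent in $G_{n-1}$. The argument for $K^{(1)}\cup K^{(2)}$ is symmetric since $|1-2|=1\neq 2$. This short gap-structure argument is the step that truly leverages the specific choice of the bow and labelling, and once in hand the lemma drops out.
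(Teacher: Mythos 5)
Your construction is exactly the paper's: the same four sets (the paper calls your $K^{(m)}$ the prefix sets $V_m$), the same disjointness argument via ``two strings of $K$ differing only in the last coordinate, but not by $2$,'' and the same observation that $K^{(0)}\cup K^{(1)}$ and $K^{(2)}\cup K^{(3)}$ are the two $n$-facets of $K$. The only place you diverge is the step you correctly identify as the crux: showing the mixed unions $K^{(0)}\cup K^{(3)}$ and $K^{(1)}\cup K^{(2)}$ are USOs. The paper handles this by invoking \Cref{thm:swappingNonCubicSubgraphs}: the partial swap applied to $K$ in dimension $n$ is an USO whose two $n$-facets are precisely these mixed unions, and facets of USOs are USOs. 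You instead verify the clique condition of \Cref{lem:Gk} directly, using the observation that neither $|0-3|$ nor $|1-2|$ equals $2$, so the coordinate witnessing adjacency in $G_n$ must survive the projection to $G_{n-1}$; this is correct and complete (your cardinality count via one head and one tail per $n$-edge is also right). In effect you re-prove inline the special case of the partial swap that the lemma needs; the paper's route is shorter given that \Cref{thm:swappingNonCubicSubgraphs} is already established, while yours is more self-contained and makes explicit the gap-structure reason the bow and labelling were chosen. Your choice of $S^{(1)}_j$, $S^{(3)}_j$ (two disjoint halves of the canonical USO rather than the paper's $\emptyset$ and the full canonical USO) is an equally valid way to satisfy the vacuous condition (ii).
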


\begin{proof}
As our input USO has no upwards edges in direction $h=1$, the sets \(S^{(1)}_{1,\ldots,i}\) and \(S^{(3)}_{1,\ldots,i}\) are not used at all in this construction.
We simply define all \(S^{(1)}_j\) to be the empty set and all \(S^{(3)}_j\) to be the canonical $(n-1)$-dimensional USO.

We use \(i=2\) many labels, and thus need to define the four sets \(S^{(0)}_1,S^{(0)}_2,S^{(2)}_1,S^{(2)}_2\). Note that when applying the rewriting rule to the bow labelled by $L$, each of these sets is used to rewrite exactly one string. Furthermore, each vertex of the bow has a different value in its second coordinate, which will not be changed by the rewriting rule. We thus pick our four sets as follows:
\begin{alignat*}{1}
    S^{(0)}_1\coloneqq V_3, \qquad
    S^{(2)}_1\coloneqq V_2, \qquad
    S^{(0)}_2\coloneqq V_1, \qquad
    S^{(2)}_2\coloneqq V_0,
\end{alignat*}
where $V_m\coloneqq \{v_1,\ldots,v_{n-1} \;|\; v\in K \text{ and } v_n=m\}$ is the set of prefixes of strings in $K$ ending with~$m$.

It is now easy to see that applying this rewriting rule to the bow labelled by $L$ results in $K$ again, as the rewriting of each vertex of the bow results in the set of strings of $K$ ending in one of four values.

It remains to prove that these sets form a valid generalized rewriting rule. First, note that the disjoint union of $S_1^{(0)}$ and $S_1^{(2)}$ forms the upper $n$-facet of the target USO $K$. Similarly, the disjoint union of $S_2^{(0)}$ and $S_2^{(2)}$ forms the lower $n$-facet of $K$. The partitioning of these facets into their parts depends on the direction of the edges connecting these facets in $K$, with vertices incident to upwards $n$-edges lying in $S_1^{(0)}$ or $S_2^{(0)}$.

To show correctness of the generalized rewriting rule, we only need to check that four combinations of sets are disjoint and their unions are USOs. 
First, we check disjointness. 
If some $S^{(0)}_j=V_m$ and $S^{(2)}_{j'}=V_{m'}$ were not disjoint, this would imply that there are two strings in $K$ which only differ in the last coordinate, but not by exactly $2$, as $m$ and $m'$ cannot differ by exactly $2$. This would be a contradiction to $K$ being an USO.

Let us now check for the USO property. As $\big(S^{(0)}_1\cup S^{(2)}_1\big)$ forms the upper $n$-facet of $K$, and $\big(S^{(0)}_2\cup S^{(2)}_2\big)$ forms the lower $n$-facet of $K$, it is clear that these are USOs. 
For the remaining pairs, consider the orientation obtained by a partial swap as defined in \Cref{sec:nonCubicSubgraphs} to $K$ on coordinate~$n$. 
By \Cref{thm:swappingNonCubicSubgraphs}, this results in an USO. This USO has  
\(\big(S^{(0)}_1 \cup S^{(2)}_2\big)\) and \(\big(S^{(0)}_2 \cup S^{(2)}_1\big)\) as its $n$-facets, and we conclude that these are also USOs.

We have thus proven that all  combinations of $S^{(0)}_j$ and $S^{(2)}_{j'}$ are disjoint and their union is an USO, showing that our given sets $S^{(0)}_{1,2},S^{(1)}_{1,2},S^{(2)}_{1,2},S^{(3)}_{1,2}$ form a valid generalized rewriting rule.
\end{proof}

These generalized rewriting rules are strictly stronger than product constructions. 
Not only can they produce every USO as the above proof shows, we explicitly illustrate this strength by the following example, which cannot be obtained by a product construction.

\begin{example}

The following $3$-dimensional USO cannot be constructed by a non-trivial product construction ($d\not=0$ and $k\not=0$).

\begin{center}
\renewcommand{\vColorA}{red}
\renewcommand{\vColorB}{blue}
    \begin{tikzpicture}[scale=1.5]

\node(00) at (0, 0) {\textcolor{\vColorA}{0}1};
\node(01) at (0, 1) {\textcolor{\vColorB}{0}3};
\node(10) at (1, 0) {\textcolor{\vColorA}{2}0};
\node(11) at (1, 1) {\textcolor{\vColorB}{2}2};
\draw[very thick,postaction=-<-] (00) -- (10);
\draw[very thick,postaction=->-] (00) --  (01);
\draw[very thick,postaction=-<-] (10) --  (11);
\draw[very thick,postaction=-<-] (01) -- (11);
\path[thick,<->] (0,1.4) edge node[above] {$h=1$} (1,1.4);
\node at (0.5, 0.5) {bow};
\node at (2.8, 0.5) {$\Rightarrow$};
\node at (2.8, 1) {Apply rewriting rule};
\end{tikzpicture}
\begin{tikzpicture}[scale=2]

\node (101) at (0, 0, 0) {\textcolor{\vColorA}{10}1};
\node (103) at (0, 0, -1) {\textcolor{\vColorB}{10}3};
\node (020) at (0, 1, 0) {\textcolor{\vColorA}{02}0};
\node (122) at (0, 1, -1) {\textcolor{\vColorB}{12}2};

\node (220) at (1, 1, 0) {\textcolor{\vColorA}{22}0};
\node (300) at (1, 0, 0) {\textcolor{\vColorA}{30}0};
\node (312) at (1, 0, -1) {\textcolor{\vColorB}{31}2};
\node (332) at (1, 1, -1) {\textcolor{\vColorB}{33}2};

\draw[very thick,postaction=->-] (101) --  (300);
\draw[very thick,postaction=->-] (103) --  (312);
\draw[very thick,postaction=->-] (122) --  (332);
\draw[very thick,postaction=-<-] (020) --  (220);

\draw[very thick,postaction=-<-] (101) --  (020);
\draw[very thick,postaction=-<-] (103) --  (122);
\draw[very thick,postaction=-<-] (300) --  (220);
\draw[very thick,postaction=->-] (312) --  (332);

\draw[very thick,postaction=-<-] (020) --  (122);
\draw[very thick,postaction=->-] (101) --  (103);
\draw[very thick,postaction=-<-] (300) --  (312);
\draw[very thick,postaction=-<-] (220) --  (332);

\end{tikzpicture}

\end{center}
It can however be constructed by a generalized rewriting rule applied to the bow, which we will now reverse engineer: We sort the prefixes of the vertices in the target USO according to their last digit.
The four blue vertices in target USO are sorted into $\big(S^{(0)}_1 \cup S^{(2)}_1\big)$ and the four orange vertices in the front are sorted into $\big(S^{(0)}_2 \cup S^{(2)}_2\big)$. 
The vertices incident to an upwards edge are sorted into $S^{(0)}_j$, the vertices incident to a downwards edge are put into $S^{(2)}_j$.

We thus get the following generalized rewriting rule with $d=2$. When applied to the bow with $h=1$ and the labelling function $L$ as described in \cref{thm:universality}, the result is the wanted target USO.

\begin{minipage}{0.45\textwidth}
\begin{center}
\begin{tabular}{c|c|c|c}
$S^{(0)}_1$ & $S^{(2)}_1$ & $S^{(0)}_2$ & $S^{(2)}_2$\\ \hline
10 & 12 & 10 & 02 \\
   & 33 &    & 22 \\
   & 31 &    & 30 \\
\end{tabular}
\end{center}
\end{minipage}\hfill
\begin{minipage}{0.45\textwidth}
\renewcommand{\vColorA}{black}
\renewcommand{\vColorB}{black}
\begin{tikzpicture}[scale=1]
\drawUSO{1}{0}{1}{2}{3}{1}{3}{3}
\node at (1, 1) {\textcolor{blue}{$S^{(0)}_1 \cup S^{(2)}_1$}};
\end{tikzpicture}
\begin{tikzpicture}[scale=1]
\drawUSO{1}{0}{0}{2}{3}{0}{2}{2}
\node at (1, 1) {\textcolor{red}{$S^{(0)}_2 \cup S^{(2)}_2$}};
\end{tikzpicture}
\end{minipage}

\end{example}

\section{Relationship with Known Constructions}\label{sec:emulating}
In this section, we revisit the known construction methods and analyze their relationship with (generalized) rewriting rules.

\subsection{Product Construction}
We can emulate the product construction using generalized rewriting rules. The $k$-dimensional frame of the product construction is used as the input USO. Let the labelling function $L$ be a bijection from $V(Q_k)$ to $[2^k]$. The rewriting rule is applied to the last dimension, i.e., $h=k$. The set $S^{(m)}_j$ is given by the hypervertex USO $O_{L^{-1}(j)}$, where each string is additionally prefixed with the digit $m$. It is easy to see that this generalized rewriting rule maps each vertex to the corresponding hypervertex of the product construction.

\subsection{Taking Facets}
The upper $h$-facet of an input USO can be obtained by applying the rewriting rule $S^{(0)}=S^{(1)}=\emptyset$ and $S^{(2)}=S^{(3)}=\{\epsilon\}$ to dimension $h$, where $\epsilon$ is the empty string. Symmetrically, the lower $h$-facet can be obtained. Note that these rules fit the conditions of \Cref{def:rewritingRule}, as $\{\epsilon\}$ is a (in fact, the only) valid $0$-dimensional USO.

\subsection{Inherited Orientations}
Similarly to taking facets, the inherited orientation collapsing each $1$-dimensional face spanned by dimension $h$ can be achieved by applying the rewriting rule $S^{(1)}=S^{(2)}=\emptyset$ and $S^{(0)}=S^{(3)}=\{\epsilon\}$ to dimension $h$. To collapse multiple dimensions at once, this rule can be applied consecutively to these dimensions.

\subsection{Flipping and Mirroring}
We have seen in \Cref{sec:flipping,sec:swapping} that we can flip all edges in a single dimension, or mirror the USO along a single dimension using simple rewriting rules.

\subsection{Hypervertex Replacements}
A hypervertex replacement is an operation that is specific to certain USOs.
The same face cannot be replaced by a different orientation on every USO, only on those that fulfill the condition of this face being a hypervertex.
Since rewriting rules apply to every USO, without condition, something about the rewriting rule would have to be able to distinguish details about the structure of the USO.
We think that such precision is not possible.

We can, however, reprove the validity of hypervertex replacement.
Given an $n$-dimensional USO~\(K\) and a hypervertex \(H\) of \(K\), we ask if there is a dimension which is not included in the hypervertex.
If there is no such dimension, the hypervertex is actually the whole USO, and replacing it by a different USO certainly yields an USO.
If there is such a dimension, without loss of generality this dimension is \(n\).
We can view \(K\) as the result of our universality construction.
Because \(H\) is a hypervertex, all edges in direction \(n\) incident to \(H\) are oriented consistently, guaranteeing that all vertices of \(H\) belong to exactly one of \(S^{(0)}_1,S^{(0)}_2,S^{(2)}_1,S^{(2)}_2\).
Without loss of generality, assume~\(H \subseteq S^{(0)}_1\).
If we replace \(H\) with another USO, we obtain a new set, \({S'}^{(0)}_1\).
To ensure that replacing \(S^{(0)}_1\) with \({S'}^{(0)}_1\) results in an USO, we only need to check that two orientations of one dimension lower, \(\big({S'}^{(0)}_1 \cup S^{(2)}_1\big)\) and \(\big({S'}^{(0)}_1 \cup S^{(2)}_2\big)\), are USOs.
In each of these, the replacement for~\(H\) is again a hypervertex.
Applying induction on these smaller dimensional USOs, we see that \(H\) can be replaced by any other USO of the same size.

\subsection{Phase Flipping}
While we cannot emulate phase flips using (generalized) rewriting rules since the framework cannot efficiently determine phases of a given USO, phase flips have a deep connection with the splittings we used in our universality construction and with the partial swap operation from \Cref{sec:nonCubicSubgraphs}.

In the universality construction, the directions of the $n$-edges decide which vertices of the upper facet are placed into $S_1^{(0)}$, and which into $S_1^{(2)}$ (and similarly, how the vertices of the lower facet are split into $S_2^{(0)}$ and $S_2^{(2)}$). The $n$-phases, being the sets of $n$-edges which can be flipped together, thus also describe how these facets can be split, while still fulfilling that the disjoint unions $\big(S_1^{(0)}\cup S_2^{(2)}\big)$ and $\big(S_2^{(0)}\cup S_1^{(2)}\big)$ are USOs. This view allows us to prove the following lemma on partial swaps and phase flips:
\begin{lemma}\label{lem:partialSwapPreservesPhases}
Let $K$ be some $k$-dimensional USO. Performing a partial swap on $K$ in dimension $h\in [k]$ does not change the set of $h$-phases.
\end{lemma}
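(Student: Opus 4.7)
The plan is to show that $K$ and $K'$ admit exactly the same valid $h$-flips---subsets $F$ of $h$-edges such that reversing the edges in $F$ yields another USO---from which the lemma follows immediately, since by the definition recalled in the paper the $h$-phases are the atoms of the partition generated by the lattice of valid flips. Thus it suffices to prove: for every $F$, $K^F$ is an USO iff $K'^F$ is an USO.

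I would first record two elementary properties of the partial swap on dimension $h$: (a) it preserves every $h$-edge direction, i.e., $O_{K'}(v)_h = O_K(v)_h$ for every vertex $v$; and (b) for any $l \neq h$, one has $O_{K'}(v)_l = O_K(v)_l$ when $O_K(v)_h = 0$ and $O_{K'}(v)_l = O_K(v \xor h)_l$ when $O_K(v)_h = 1$. Both are immediate from the simple rewriting rule $(\{0\},\{3\},\{2\},\{1\})$ that defines partial swap: it only rewrites the $h$-th coordinate of each string and so effectively swaps the strings sitting at the two endpoints of every upward $h$-edge, while leaving all other positions untouched.

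Using these, I would then characterize valid flips via the \szabo{}--Welzl condition (\Cref{thm:szabowelzl}). Writing $f_v := [\{v, v\xor h\}\in F]$ and using that flipping $F$ preserves every non-$h$-edge direction, one checks that pairs $(v,w)$ with $v_h = w_h$ contribute no constraint (their SW-witness lives inside the unchanged facet), pairs at Hamming distance~$1$ are automatic, and a pair $(v,w)$ with $v_h \neq w_h$ at distance $\geq 2$ forces $f_v = f_w$ if and only if the unique SW-witness of $(v,w)$ in $K$ is $h$ itself---meaning $O_K(v)_h = O_K(w)_h$ and $O_K(v)_l \neq O_K(w)_l$ for every $l \neq h$ with $v_l \neq w_l$. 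Call such a pair \emph{$h$-critical for $K$}. Then $F$ is a valid flip for $K$ iff $f_v = f_w$ holds for every $h$-critical pair of $K$, and analogously for $K'$.

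The heart of the argument is then to show that the constraint systems induced by the $h$-critical pairs of $K$ and of $K'$ coincide. By (a), every $h$-critical pair $(v,w)$ satisfies $O_K(v)_h = O_K(w)_h =: \delta$. When $\delta = 0$, observation (b) gives $O_{K'}(v)_l = O_K(v)_l$ and $O_{K'}(w)_l = O_K(w)_l$ for every $l \neq h$, so $(v,w)$ is $h$-critical for $K'$ iff it is $h$-critical for $K$, and both yield the constraint $f_v = f_w$. When $\delta = 1$, observation (b) gives instead $O_{K'}(v)_l = O_K(v \xor h)_l$ and $O_{K'}(w)_l = O_K(w \xor h)_l$, so $(v,w)$ is $h$-critical for $K'$ iff $(v \xor h, w \xor h)$ is $h$-critical for $K$; since $v$ and $v \xor h$ share the same $h$-edge (so $f_v = f_{v \xor h}$, and likewise for $w$), the two critical pairs yield the identical constraint $f_v = f_w$. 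The involution $(v,w)\mapsto(v\xor h, w\xor h)$ thus matches the $\delta=1$ critical pairs of $K'$ bijectively with those of $K$ while preserving the associated constraint. Combining the two cases, $K$ and $K'$ impose the same constraints on $F$, so the sets of valid flips---and hence the $h$-phases---coincide. The main obstacle I anticipate is the bookkeeping in the $\delta = 1$ case: one must verify that partial swap does not introduce any \emph{new} $h$-critical pair in $K'$ without a counterpart in $K$, and the involution $(v,w)\mapsto(v\xor h, w\xor h)$ is exactly the structural observation that makes this work.
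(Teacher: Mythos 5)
Your proof is correct, but it takes a genuinely different route from the paper's. You work directly at the level of the \szabo{}--Welzl condition: you characterize the valid $h$-flips of an USO as the solution sets of a constraint system $\{f_v=f_w\}$ indexed by ``$h$-critical'' pairs (pairs whose only witness is dimension $h$), and then show that the partial swap transports the constraint system of $K$ onto that of $K'$ --- identically for pairs with downward $h$-edges, and via the involution $(v,w)\mapsto(v\xor h,w\xor h)$ for pairs with upward $h$-edges, which preserves the underlying pair of $h$-edges and hence the constraint. The key facts you need, that the swap preserves $O(\cdot)_h$ everywhere and permutes the non-$h$ outmap data along upward $h$-edges, are exactly right, and your case analysis does cover the needed converse direction (a critical pair of $K'$ must have equal $h$-outmaps by your observation (a), so it falls into one of your two cases and has a counterpart in $K$). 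The paper instead stays inside its rewriting/tiling formalism: it introduces the four orientations $K$, $K_f$, $K_s$, $K_{f\circ s}$, splits each into the lists $V_0,\dots,V_3$ from the universality construction and further into sublists by membership in $P$, and deduces that $K_{f\circ s}$ is an USO because every pairwise combination of the relevant sublists already appears inside one of the three orientations known to be USOs; the involution property of the partial swap then upgrades ``unions of phases remain flippable'' to equality of the phase partitions. Your argument is more elementary and self-contained (no tilings, no rewriting rules), and it yields slightly more explicit information --- the constraint graphs themselves correspond, not just the resulting lattices of valid flips --- whereas the paper's proof is designed to showcase how the rewriting-rule decomposition encodes phase structure, which is the thematic point of that section.
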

\begin{proof}
Let $K$ be a $k$-dimensional USO and $P$ some union of $h$-phases of $K$. Without loss of generality, assume $h=k$. We will prove that after applying a partial swap in dimension $h$ to $K$, $P$ can still be flipped. This shows that unions of $h$-phases before the partial swap remain unions of $h$-phases after the swap. As the partial swap is an involution, this proves the lemma.

We introduce the following four orientations: $K$ (our input USO), $K_f$ ($K$ after flipping $P$), $K_s$~($K$ after performing the partial swap in dimension $h$), and finally $K_{f\circ s}$, which is the orientation obtained by flipping $P$ in $K_s$. By the definition of phases, and by \Cref{thm:swappingNonCubicSubgraphs}, $K$, $K_s$, and $K_f$ are USOs. We need to prove that $K_{f\circ s}$ is also an USO.

We split each of these orientations into the four lists $V_3,V_2,V_1,V_0$ as in our universality construction (cf. \Cref{thm:universality}). If these four lists yield a valid generalized rewriting rule when used as the sets $S_1^{(0)}, S_1^{(2)}, S_2^{(0)}, S_2^{(2)}$, then the orientation must be USO, as applying the rule to the bow yields that orientation. We describe the contents of the lists $V_3,V_2,V_1,V_0$ of $K$ by the four lists $A,B,C,D$. Each of these lists is further split into two sublists, those vertices incident to an edge of~$P$, denoted by the superscript $P$, and those which are not, denoted by the superscript $\overline{P}$. We then use these $8$ sublists to characterize the other three orientations, $K_f$, $K_s$, and $K_{f\circ s}$. See \Cref{fig:phaseswap} for all of these splittings.

\begin{figure}[h!]
    \centering
    \includegraphics[scale=0.7]{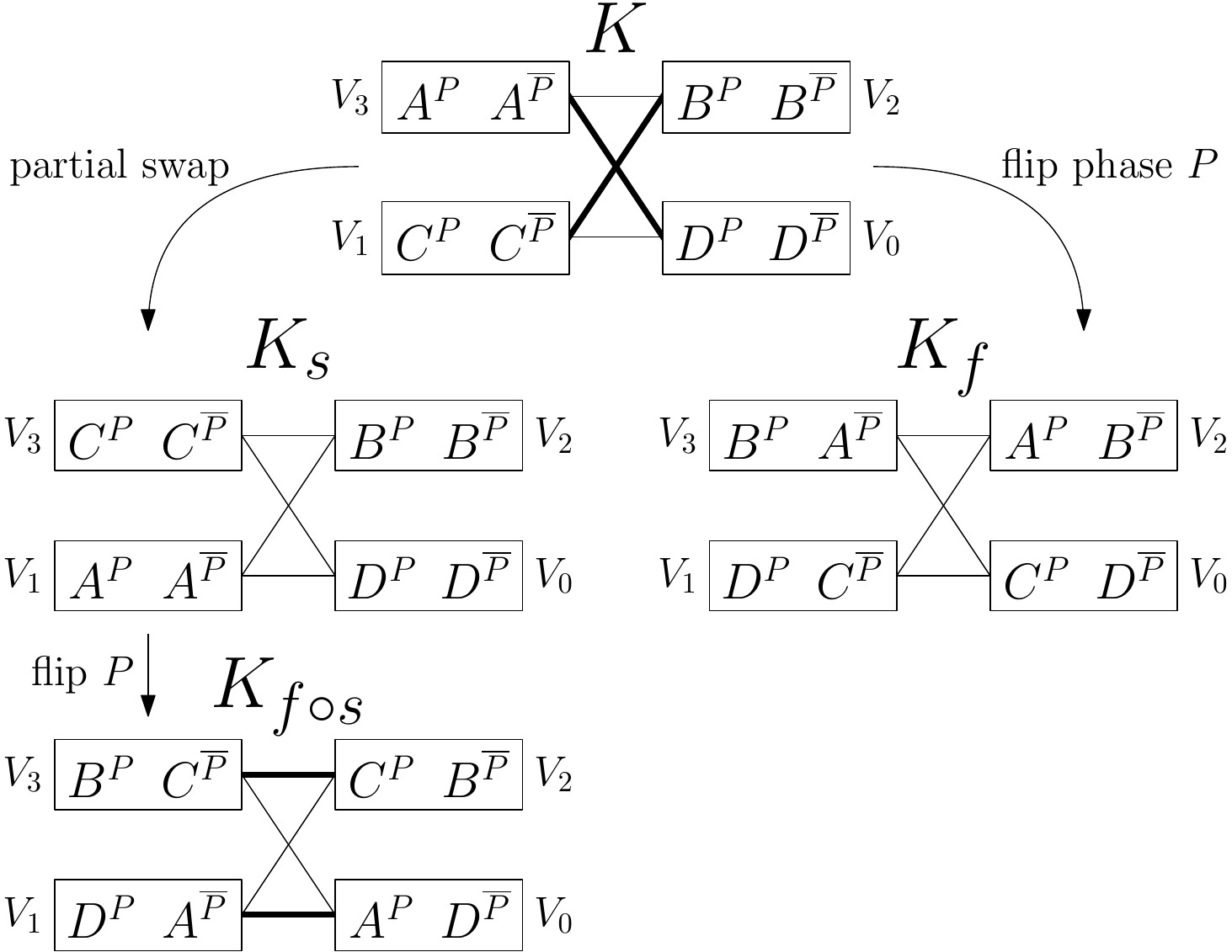}
    \caption{The splittings of the orientations $K$, $K_s$, $K_f$, and $K_{f\circ s}$ into their lists $V_3,V_2,V_1,V_0$. The connections between blocks indicate the requirement of those blocks being disjoint and forming an USO as their union. For $K$, $K_s$, and $K_f$, this is guaranteed, while we aim to prove it for $K_{f\circ s}$.}
    \label{fig:phaseswap}
\end{figure}

We now only need to prove for all four combinations of lists of $K_{f\circ s}$ (connections between blocks in \Cref{fig:phaseswap}), the combinations form an USO as their disjoint union. For the horizontal combinations (bold), this simply follows from the diagonal combinations in $K$ (bold). For the diagonal combinations, we have to do more work. We only prove it here for $D^P\cup A^{\overline{P}}\cup C^P\cup B^{\overline{P}}$, but the proof for the other combination works symmetrically. Every combination of two of these four sets occurs in some other orientation $K$, $K_f$, or $K_s$. This shows that any two of these four sets are disjoint, and they form part of an USO together, i.e., any two strings contained in them must have a coordinate in which they differ by exactly $2$. Therefore, in the union of all four sets, any two strings must differ by exactly $2$ in some coordinate. Thus, all four sets must be disjoint and form an USO together. Together with the symmetric argument for the other combination, this proves that $K_{f\circ s}$ is an USO, and that $P$ is a union of phases in $K_s$, proving the lemma.
\end{proof}

This lemma allows us to generalize the partial swaps to \emph{phase swaps}:
\begin{corollary}
Let $K$ be a $k$-dimensional USO and $h\in[k]$ some dimension. Let $P$ be some union of $h$-phases of $K$. Let $U\subset V(K)$ be the set of vertices in the upper $h$-facet of $K$ that are incident to an edge of $P$. Similarly, let $L\subset V(K)$ be the neighbors of $U$ in the lower $h$-facet. Now let $K_U$ be the subgraph induced by $U$ and $K_L$ be the subgraph induced by $L$. Swapping these two subgraphs yields another orientation $K'$, which is an USO.
\end{corollary}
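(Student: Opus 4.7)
The plan is to verify the corollary directly via the Szabó-Welzl characterization (\Cref{thm:szabowelzl}), after first checking that the swap produces a well-defined orientation. The phase swap admits a clean description via the involution $\pi\colon V(Q_k)\to V(Q_k)$ that sends $v\mapsto v\oplus h$ whenever the $h$-edge at $v$ lies in $P$, and fixes $v$ otherwise. The function $\pi$ is well-defined (both endpoints of an $h$-edge agree on $P$-membership) and is a bijection (if $\pi(v)=\pi(w)$ with $v\neq w$, one of them would have to be swapped while the other is fixed along their shared $h$-edge, contradicting consistent $P$-status). Setting $O_{K'}(v):=O_K(\pi(v))$ encodes the informal ``swap $K_U$ and $K_L$'' description, and the $h$-coordinate of $O_{K'}(v)$ remains equal to $O_K(v)_h$ because $O_K(v)_h=O_K(v\oplus h)_h$ by consistency of $K$. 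Throughout, the fact that $P$ is a union of $h$-phases enters through $K_f$, the USO obtained from $K$ by flipping $P$, being itself a USO.

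\emph{Consistency.} I first verify that $O_{K'}$ actually defines an orientation, i.e.\ for every edge $\{v,v\oplus j\}$ we have $O_{K'}(v)_j=O_{K'}(v\oplus j)_j$. The case $j=h$ is immediate. For $j\neq h$, if both endpoints share $P$-status the identity follows from consistency of $K$ applied to $(v,v\oplus j)$ or $(v\oplus h,v\oplus h\oplus j)$. The delicate case is when exactly one endpoint, say $v$, has its $h$-edge in $P$, which reduces the required identity to $O_K(v)_j=O_K(v\oplus h)_j$. I obtain this via a $2$-face argument on $f=\{v,v\oplus h,v\oplus j,v\oplus h\oplus j\}$: in $K$ the two $h$-edges of $f$ are treated differently (only $\{v,v\oplus h\}$ is in $P$), so in $K_f$ exactly one of them is flipped. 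Enumerating the four possible sink locations in $f$ in each of $K$ and $K_f$ while assuming the two $j$-orientations differ quickly forces either $K|_f$ or $K_f|_f$ to contain zero or two sinks, contradicting that both $K$ and $K_f$ are USOs.

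\emph{Szabó-Welzl.} Given distinct positions $v,w$, I apply Szabó-Welzl in $K$ to the pair $(\pi(v),\pi(w))$, obtaining an index $i$ with $\pi(v)_i\neq\pi(w)_i$ and $O_K(\pi(v))_i=O_K(\pi(w))_i$. Since $\pi$ only alters the $h$-coordinate, the orientation equality translates verbatim to $O_{K'}(v)_i=O_{K'}(w)_i$, and the index $i$ witnesses $v_i\neq w_i$ whenever $i\neq h$ or whenever $\pi$ treats $v,w$ consistently (both swapped or both fixed). The only remaining scenario is that exactly one of $v,w$ lies on a $P$-edge and every Szabó-Welzl index of $(\pi(v),\pi(w))$ in $K$ equals $h$. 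In this case I apply Szabó-Welzl to the same pair in $K_f$: the option $i=h$ is excluded because flipping $P$ flips $O_K(\pi(v))_h$ but not $O_K(\pi(w))_h$ (or vice versa), breaking the equality at the $h$-coordinate. Therefore Szabó-Welzl in $K_f$ supplies some $i'\neq h$; since flipping $P$ does not affect coordinates other than $h$, that same $i'$ satisfies the Szabó-Welzl condition for $(v,w)$ in $K'$. The main obstacle, and the point at which the phase hypothesis is essential rather than cosmetic, is precisely these asymmetric situations: the interplay between $K$ and $K_f$ is exactly what rules out the configurations that would otherwise spoil consistency or Szabó-Welzl for $K'$.
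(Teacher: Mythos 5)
Your proof is correct, but it takes a genuinely different route from the paper's. The paper proves this corollary in three lines by reduction to the partial swap: first flip a union of $h$-phases so that the upward $h$-edges become exactly $P$, then apply \Cref{thm:swappingNonCubicSubgraphs}, then flip the same phases back, invoking \Cref{lem:partialSwapPreservesPhases} to guarantee that the final flip is still a legal phase flip; this rests on the rewriting-rule machinery behind that lemma (and, implicitly, on the fact that the set of upward $h$-edges is itself a union of $h$-phases, so that the normalizing flip is available at all). You instead verify the \szabo{}-Welzl condition (\Cref{thm:szabowelzl}) for $O_{K'}=O_K\circ\pi$ directly, using only the one consequence of the phase hypothesis that the orientation $K_f$ obtained by flipping $P$ is a USO. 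The two places where that hypothesis bites are handled correctly: consistency on edges leaving $U\cup L$ inside a facet (your $2$-face argument, which rightly observes that a $2$-face in which both opposite edge pairs disagree has no unique sink, so $K$ and $K_f$ cannot both restrict to USOs on $f$ if the $j$-edges disagreed), and the witness for asymmetric pairs (excluding $i=h$ in $K_f$ and transferring a witness $i'\neq h$ back to $K'$). Your approach is longer but self-contained and more elementary --- it bypasses \Cref{thm:swappingNonCubicSubgraphs}, \Cref{lem:partialSwapPreservesPhases} and the tiling framework entirely, and it makes explicit the well-definedness of the swapped orientation on edges crossing the boundary of $U$ and $L$, a point the paper's ``swap the induced subgraphs'' phrasing leaves implicit. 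The paper's proof buys brevity by reusing its structural lemmas; yours buys independence from them.
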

\begin{proof}
This operation is equivalent to flipping $h$-phases such that $P$ is exactly the set of upwards $h$-edges of $K$, then performing a partial swap, and finally flipping the same $h$-phases again. By \Cref{lem:partialSwapPreservesPhases}, flipping the same $h$-phases after the partial swap is possible, as they are still $h$-phases.
\end{proof}

Note that while we used rewriting rules in the proof of \Cref{lem:partialSwapPreservesPhases}, the operation of a phase swap does not seem to be realizable by a (generalized) rewriting rule.

\section{Open Questions}\label{sec:conclusion}
\paragraph{Multi-Dimensional Rewriting.} The 8-dimensional counterexample to Keller's conjecture of Mackey~\cite{mackey2002eightdimensional} is built using a further generalization of rewriting rules. Instead of replacing each coordinate of a string individually, Mackey replaces pairs of coordinates together, using the information of both coordinates to pick the correct list of replacement strings. This requires $4^2=16$ lists $S^{(00)},S^{(01)},\ldots,S^{(32)},S^{(33)}$. In this setting it becomes more complex to phrase the requirements on these lists to ensure that the rewriting rule preserves the USO property. 

One possibility is to require all $4$ lists that correspond to a $2$-dimensional USO (e.g., $S^{(01)}$, $S^{(20)}$, $S^{(03)}$ and $S^{(22)}$) to form a partition of the strings describing a $d$-dimensional USO. 
This generalizes badly to higher dimensions $\ell$, where $\ell$ is the number of entries which are replaced, as one would have to know the set of all USOs of that dimension to even check that the given lists are valid.

Alternatively, one could be more strict with the requirements, using the two following conditions.
Firstly, if \(u,v\in \{0,1,2,3\}^\ell\) differ by \(2\) in some coordinate, then $S^{(u)},S^{(v)}$ are disjoint and the union $\big(S^{(u)} \cup S^{(v)}\big)$ is part of a $d$-dimensional USO.
Secondly, $\abs{S^{(m)}} = 2^d/2^\ell$ for all $m \in \{0,1,2,3\}^\ell$.

These two conditions are what was used to argue for correctness of the concrete counterexample of Mackey~\cite{mackey2002eightdimensional}. 
One can prove easily that any rewriting rule fulfilling these conditions preserves the USO property. 
While these stronger conditions are simpler to check, they have a significant drawback: it is not clear whether for every choice of $\ell$ and $d$, there even exist interesting sets of lists beyond the trivial sets one can come up with quickly.

It would be interesting to further investigate this generalization, and to research proper requirements which are easy to check and still allow for interesting instantiations.

\paragraph{Interesting Special Cases.}
While we already pointed out partial swaps (\Cref{thm:swappingNonCubicSubgraphs}) as one interesting construction arising from the rewriting rules (for $d=1$ it is the only interesting one), we strongly believe that more interesting special cases can be found for larger values of $d$. The usefulness of such concrete construction methods should be evaluated for various applications, for example for deriving lower bounds for the \textsc{Random Facet} algorithm.

\paragraph{Enumerating All USOs.}
Even though generalized rewriting rules can generate every USO, it is unclear whether they can be used to enumerate all USOs efficiently. The sequence of rewritings used in the proof of \Cref{thm:universality} is not useful for enumeration, as testing the conditions of a generalized rewriting rule is just as computationally intense as testing the resulting orientation for the USO property. Our construction might still be able to enumerate efficiently via a different sequence of constructions, and we consider finding such a sequence an important open problem.

\paragraph{Other Construction Methods.}
As can be seen from our problems with enumeration, the fact that the rewriting rules can be used to create every USO does not satisfy all of our needs for USO constructions. It is still important to research new construction methods, which may be more useful for certain applications such as enumeration. Our rewriting framework is quite different to the previously known constructions, as it is the first one based on mappings of the information of single vertices to the information of a set of vertices. Our results may open the door for more constructions of this type.

\newpage
\bibliographystyle{plainurl}
\bibliography{USO,literature}

\begin{thebibliography}{10}

\bibitem{bosshard2017pseudo}
Vitor Bosshard and Bernd Gärtner.
\newblock Pseudo unique sink orientations, 2017.
\newblock \href {http://arxiv.org/abs/1704.08481} {\path{arXiv:1704.08481}}.

\bibitem{brakensiek2022kellerresolved}
Joshua Brakensiek, Marijn Heule, John Mackey, and David Narv{\'a}ez.
\newblock The resolution of keller's conjecture.
\newblock {\em Journal of Automated Reasoning}, 66(3):277--300, Aug 2022.
\newblock \href {https://doi.org/10.1007/s10817-022-09623-5}
  {\path{doi:10.1007/s10817-022-09623-5}}.

\bibitem{corradi1990kellerconjecturestrings}
Keresztély Corr{\'a}di and S{\'a}ndor Szab{\'o}.
\newblock A combinatorial approach for keller's conjecture.
\newblock {\em Periodica Mathematica Hungarica}, 21(2):95--100, Jun 1990.
\newblock \href {https://doi.org/10.1007/BF01946848}
  {\path{doi:10.1007/BF01946848}}.

\bibitem{fearnley2020ueopl}
John Fearnley, Spencer Gordon, Ruta Mehta, and Rahul Savani.
\newblock Unique end of potential line.
\newblock {\em Journal of Computer and System Sciences}, 114:1--35, 2020.
\newblock \href {https://doi.org/10.1016/j.jcss.2020.05.007}
  {\path{doi:10.1016/j.jcss.2020.05.007}}.

\bibitem{gao2020dcubes}
Yuan Gao, Bernd G{\"a}rtner, and Jourdain Lamperski.
\newblock A new combinatorial property of geometric unique sink orientations,
  2020.
\newblock \href {http://arxiv.org/abs/2008.08992} {\path{arXiv:2008.08992}}.

\bibitem{gaertner2002simplex}
Bernd G{\"a}rtner.
\newblock The random-facet simplex algorithm on combinatorial cubes.
\newblock {\em Random Structures \& Algorithms}, 20(3):353--381, 2002.
\newblock \href {https://doi.org/10.1002/rsa.10034}
  {\path{doi:10.1002/rsa.10034}}.

\bibitem{gaertner2008grids}
Bernd G{\"a}rtner, Walter~D. Morris~jr., and Leo R{\"u}st.
\newblock Unique sink orientations of grids.
\newblock {\em Algorithmica}, 51(2):200--235, 2008.
\newblock \href {https://doi.org/10.1007/s00453-007-9090-x}
  {\path{doi:10.1007/s00453-007-9090-x}}.

\bibitem{gaertner2006lpuso}
Bernd G{\"a}rtner and Ingo Schurr.
\newblock Linear programming and unique sink orientations.
\newblock In {\em Proc. 17th Annual ACM-SIAM Symposium on Discrete Algorithms
  (SODA)}, pages 749--757, 2006.
\newblock \href {https://doi.org/10.5555/1109557.1109639}
  {\path{doi:10.5555/1109557.1109639}}.

\bibitem{gaertner2015recognizing}
Bernd G{\"a}rtner and Antonis Thomas.
\newblock The complexity of recognizing unique sink orientations.
\newblock In Ernst~W. Mayr and Nicolas Ollinger, editors, {\em 32nd
  International Symposium on Theoretical Aspects of Computer Science (STACS
  2015)}, volume~30 of {\em Leibniz International Proceedings in Informatics
  (LIPIcs)}, pages 341--353, Dagstuhl, Germany, 2015. Schloss
  Dagstuhl--Leibniz-Zentrum für Informatik.
\newblock \href {https://doi.org/10.4230/LIPIcs.STACS.2015.341}
  {\path{doi:10.4230/LIPIcs.STACS.2015.341}}.

\bibitem{gaertner2016niceusos}
Bernd G{\"a}rtner and Antonis Thomas.
\newblock The niceness of unique sink orientations.
\newblock In Klaus Jansen, Claire Mathieu, Jos{\'e} D.~P. Rolim, and Chris
  Umans, editors, {\em Approximation, Randomization, and Combinatorial
  Optimization. Algorithms and Techniques (APPROX/RANDOM 2016)}, volume~60 of
  {\em Leibniz International Proceedings in Informatics (LIPIcs)}, pages
  30:1--30:14, Dagstuhl, Germany, 2016. Schloss Dagstuhl--Leibniz-Zentrum für
  Informatik.
\newblock \href {https://doi.org/10.4230/LIPIcs.APPROX-RANDOM.2016.30}
  {\path{doi:10.4230/LIPIcs.APPROX-RANDOM.2016.30}}.

\bibitem{gaertner2001enforcing}
Bernd G{\"a}rtner and Emo Welzl.
\newblock Explicit and implicit enforcing - randomized optimization.
\newblock In {\em Computational Discrete Mathematics: Advanced Lectures}, pages
  25--46. Springer Berlin Heidelberg, 2001.
\newblock \href {https://doi.org/10.1007/3-540-45506-X_3}
  {\path{doi:10.1007/3-540-45506-X_3}}.

\bibitem{hajos1950factorisation}
Gy{\"o}rgy Haj{\'o}s.
\newblock Sur la factorisation des groupes ab{\'e}liens.
\newblock {\em Casopis Pest. Mat. Fys}, 74:157--162, 1950.

\bibitem{keller1930conjecture}
Ott-Heinrich Keller.
\newblock Über die lückenlose erfüllung des raumes mit würfeln.
\newblock {\em Journal für die reine und angewandte Mathematik},
  1930(163):231--248, 1930.
\newblock \href {https://doi.org/10.1515/crll.1930.163.231}
  {\path{doi:10.1515/crll.1930.163.231}}.

\bibitem{klaus2012phd}
Lorenz Klaus.
\newblock {\em A fresh look at the complexity of pivoting in linear
  complementarity}.
\newblock PhD thesis, ETH Z{\"u}rich, 2012.
\newblock \href {https://doi.org/10.3929/ethz-a-007604201}
  {\path{doi:10.3929/ethz-a-007604201}}.

\bibitem{lagarias1992keller}
Jeffrey~C. Lagarias and Peter~W. Shor.
\newblock Keller’s cube-tiling conjecture is false in high dimensions.
\newblock {\em Bulletin of the American Mathematical Society}, 27(2):279--283,
  1992.
\newblock \href {https://doi.org/10.1090/S0273-0979-1992-00318-X}
  {\path{doi:10.1090/S0273-0979-1992-00318-X}}.

\bibitem{mackey2002eightdimensional}
John Mackey.
\newblock A cube tiling of dimension eight with no facesharing.
\newblock {\em Discrete {\&} Computational Geometry}, 28(2):275--279, Aug 2002.
\newblock \href {https://doi.org/10.1007/s00454-002-2801-9}
  {\path{doi:10.1007/s00454-002-2801-9}}.

\bibitem{mathew2013enumeratingtilings}
K.~Ashik Mathew, Patric R.~J. {\"O}sterg{\aa}rd, and Alexandru Popa.
\newblock Enumerating cube tilings.
\newblock {\em Discrete {\&} Computational Geometry}, 50(4):1112--1122, Dec
  2013.
\newblock \href {https://doi.org/10.1007/s00454-013-9547-4}
  {\path{doi:10.1007/s00454-013-9547-4}}.

\bibitem{matousek2006numberusos}
Ji{\v r}{\'\i} Matou{\v s}ek.
\newblock The number of unique-sink orientations of the hypercube.
\newblock {\em Combinatorica}, 26(1):91--99, 2006.
\newblock \href {https://doi.org/10.1007/s00493-006-0007-0}
  {\path{doi:10.1007/s00493-006-0007-0}}.

\bibitem{perron1940kellersix}
Oskar Perron.
\newblock {\"U}ber l{\"u}ckenlose ausf{\"u}llung des n-dimensionalen raumes
  durch kongruente w{\"u}rfel.~{I~\&~II}.
\newblock {\em Mathematische Zeitschrift}, 46(1):1--26,161--180, Dec 1940.
\newblock \href {https://doi.org/10.1007/BF01181436}
  {\path{doi:10.1007/BF01181436}}.

\bibitem{schurr2004phd}
Ingo Schurr.
\newblock {\em Unique Sink Orientations of Cubes}.
\newblock PhD thesis, ETH Zürich, 2004.
\newblock \href {https://doi.org/10.3929/ethz-a-004844278}
  {\path{doi:10.3929/ethz-a-004844278}}.

\bibitem{schurr2004quadraticbound}
Ingo Schurr and Tibor Szab{\'o}.
\newblock Finding the sink takes some time: An almost quadratic lower bound for
  finding the sink of unique sink oriented cubes.
\newblock {\em Discrete Comput. Geom.}, 31(4):627--642, 2004.
\newblock \href {https://doi.org/10.1007/s00454-003-0813-8}
  {\path{doi:10.1007/s00454-003-0813-8}}.

\bibitem{stickney1978digraph}
Alan Stickney and Layne Watson.
\newblock Digraph models of {B}ard-type algorithms for the linear
  complementarity problem.
\newblock {\em Mathematics of Operations Research}, 3(4):322--333, 1978.
\newblock URL: \url{https://www.jstor.org/stable/3689630}.

\bibitem{szabo1993cubetilings}
S{\'a}ndor Szab{\'o}.
\newblock Cube tilings as contributions of algebra to geometry.
\newblock {\em Contributions to Algebra and Geometry}, 34(1):63--75, 1993.

\bibitem{szabo2001usos}
Tibor Szab{\'o} and Emo Welzl.
\newblock Unique sink orientations of cubes.
\newblock In {\em Foundations of Computer Science, 2001. Proceedings. 42nd IEEE
  Symposium on}, pages 547--555. IEEE, 2001.
\newblock \href {https://doi.org/10.1109/SFCS.2001.959931}
  {\path{doi:10.1109/SFCS.2001.959931}}.

\bibitem{weber2020randomfacet}
Simon Weber.
\newblock Unique sink orientations: Constructions and random facet.
\newblock Master's thesis, ETH Z{\"u}rich, November 2020.

\end{thebibliography}

\end{document}